\documentclass[11pt]{amsart}
\usepackage[a4paper,margin=1.1in]{geometry}
\usepackage{amsmath,amssymb,amsthm}
\usepackage[colorlinks=true,linkcolor=blue,citecolor=blue]{hyperref}

\newtheorem{theorem}{Theorem}
\newtheorem{lemma}[theorem]{Lemma}
\newtheorem{corollary}[theorem]{Corollary}
\newtheorem{proposition}[theorem]{Proposition}
\theoremstyle{definition}
\newtheorem{definition}[theorem]{Definition}
\theoremstyle{remark}
\newtheorem{remark}[theorem]{Remark}
\newtheorem{example}[theorem]{Example}

\newcommand{\R}{\mathbb{R}}
\newcommand{\Sn}{\mathcal{S}_n}
\newcommand{\Snp}{\mathcal{S}_n^{+}}
\newcommand{\On}{\mathcal{O}_n}
\newcommand{\V}{\mathcal{V}}
\newcommand{\Rn}{\mathcal{R}_n}
\newcommand{\Kn}{\mathcal{K}_n}
\newcommand{\En}{\mathcal{E}_n}
\newcommand{\YA}{\mathcal{Y}_A}
\newcommand{\Ln}{\mathcal{L}_n}

\newcommand{\Zc}{\mathcal{Z}}
\newcommand{\Hc}{\mathcal{H}}
\newcommand{\tr}{\operatorname{tr}}
\newcommand{\diag}{\operatorname{diag}}
\newcommand{\ip}[2]{\langle #1,#2\rangle}
\newcommand{\doi}[1]{\href{https://doi.org/#1}{\nolinkurl{doi:#1}}}

\begin{document}

\title[Guo's perturbation for symmetric nonnegative matrices]{A note on Guo's perturbation result\\ for symmetric nonnegative matrices}
\author{Helena \v{S}migoc}
\address{School of Mathematics and Statistics, University College Dublin, Belfield, Dublin 4, Ireland}
 \email{helena.smigoc@ucd.ie}
\date{Draft of 8 October 2026}
\keywords{Nonnegative inverse eigenvalue problem, symmetric nonnegative matrices, eigenvalue
perturbation, Perron eigenvalue, extreme spectra, theorem of the alternative}
\subjclass[2020]{Primary 15A29; Secondary 15A18, 15B48}

\begin{abstract}
In 1997 Guo Wuwen proved that if $(\lambda_1,\lambda_2,\dots,\lambda_n)$ is the spectrum of a
nonnegative matrix with Perron eigenvalue $\lambda_1$ and $\lambda_2$ real, then
$(\lambda_1+t,\lambda_2\pm t,\lambda_3,\dots,\lambda_n)$ is the spectrum of a nonnegative matrix for
every $t\ge 0$. Whether the same holds for symmetric nonnegative matrices has remained an open
question. We give an affirmative answer, in a slightly stronger form: if $A$ is a symmetric
nonnegative matrix with eigenvalues $\lambda_1\ge\dots\ge\lambda_n$ and
$d_1\ge|d_2|+\dots+|d_n|$, then there is a symmetric matrix $B\ge A$ with eigenvalues
$\lambda_1+d_1,\dots,\lambda_n+d_n$, and $B$ has the same diagonal as $A$ when
$d_1+\dots+d_n=0$. 

This leads us to call a symmetrically realizable list \emph{Guo-extreme} if none of Guo's moves can be
reversed within the set of symmetrically realizable lists. Every symmetrically realizable list is a
Guo-extreme list plus a vector from the cone of Guo's moves. Our second main result is a certificate,
modelled on a theorem of Laffey on extreme symmetric matrices: if $A$ is irreducible and every
nonnegative symmetric matrix that commutes with $A$ and vanishes wherever $A$ is positive is a multiple
of the identity, then the spectrum of $A$ can be moved by a small amount in every trace-preserving
direction (in every direction, if $A$ has a positive diagonal entry) and remain symmetrically
realizable; in particular it is not Guo-extreme. 
\end{abstract}

\maketitle

\section{Introduction and main results}

The nonnegative inverse eigenvalue problem (NIEP) asks which lists of complex numbers are spectra of
entrywise nonnegative matrices, and its symmetric version (SNIEP) asks which lists of real numbers are
spectra of symmetric nonnegative matrices. Both problems are open for lists of length $n\ge 5$; we refer
to the surveys \cite{ELN,JMPP}  for background, and to \cite{LM,LSp,Lo21,MP} for the
SNIEP with $n=5$, where the set of symmetrically realizable lists is still not completely known.

Results that produce new realizable lists from known ones are a basic tool in both problems. Guo
\cite{G97} proved that if $(\lambda_1,\lambda_2,\dots,\lambda_n)$ is the spectrum of a nonnegative
matrix with Perron eigenvalue $\lambda_1$ and $\lambda_2$ is real, then
$(\lambda_1+t,\lambda_2\pm t,\lambda_3,\dots,\lambda_n)$ is the spectrum of a nonnegative matrix for
every $t\ge 0$; Laffey \cite{L05} extended this to non-real $\lambda_2$. On the symmetric side,
Fiedler \cite{F74} showed how to combine two symmetrically realizable lists, and further constructions of this type are given in
\cite{LS,SRMB}. Whether Guo's result holds for symmetric realizability is stated as an open question
in \cite{LS}. For $n\le 4$ the answer is positive for a simple reason: a
real list of length $n\le 4$ is symmetrically realizable if and only if $\lambda_1\ge|\lambda_i|$ for all
$i$ and $\sum_i\lambda_i\ge 0$, and these conditions are preserved by Guo's perturbation. Our first main result,
Theorem~\ref{thm:main} below, gives an affirmative answer for all $n$.

Throughout, $\Sn$ is the space of real symmetric $n\times n$ matrices with the inner product
$\ip{X}{Y}=\tr(XY)$, $\Snp\subset\Sn$ is the set of (entrywise) nonnegative symmetric matrices of order
$n$, $\On$ is the set of orthogonal matrices of order $n$, $J$ is the all-ones matrix, and inequalities
between matrices or vectors are entrywise. The eigenvalues of $A\in\Sn$ are listed as
$\lambda_1\ge\lambda_2\ge\dots\ge\lambda_n$; if $A\in\Snp$ then $\lambda_1=\rho(A)$. The standard
basis of $\R^n$ is denoted by $e_1,\dots,e_n$, and $E_{pq}$, $1\le p,q\le n$, denotes the matrix with
$1$ in position $(p,q)$ and $0$ elsewhere. For a vector $x\in\R^n$ we write $x_i$ for its $i$-th
entry, so that $x=(x_1,\dots,x_n)$; this also applies to expressions such as $(d-\eta)_i$. For lists of
eigenvalues we usually write $\sigma=(\lambda_1,\dots,\lambda_n)$ instead. For $v\in\R^n$, $\diag(v)$
is the diagonal matrix with diagonal entries $v_1,\dots,v_n$.

We write $\Rn$ for the set of all $\sigma=(\lambda_1,\dots,\lambda_n)\in\R^n$ such that
$\lambda_1,\dots,\lambda_n$ are the eigenvalues of some $A\in\Snp$ and $\lambda_1=\rho(A)$; no order is
imposed on $\lambda_2,\dots,\lambda_n$. Thus $\Rn$ is the set of symmetrically realizable lists of
length $n$, written with the Perron eigenvalue in the first position. Since a realizing matrix
$A$ satisfies $\|A\|_F^2=\sum_i\lambda_i^2$, a compactness argument and continuity of the eigenvalues
show that $\Rn$ is a closed set. Moreover, since $\rho(A)\ge|\lambda_i|$ and $\tr A^k\ge 0$, every
$\sigma\in\Rn$ satisfies
\begin{equation}\label{eq:nec}
\lambda_1\ge|\lambda_i|\ \ (i=2,\dots,n) \text{ and } \sum_{i=1}^n\lambda_i^k \ge 0
\text{ for all } k \in \mathbb{N}.
\end{equation}

Finally, let
\begin{equation}\label{eq:cone}
\Kn=\Bigl\{d=(d_1,\dots,d_n)\in\R^n:\ d_1\ge\sum_{i=2}^n|d_i|\Bigr\}.
\end{equation}

For $n\ge 2$, $\Kn$ is a closed convex cone, $d_1>0$ for every $d\in\Kn\setminus\{0\}$, and every
$d\in\Kn$ is a nonnegative combination of the $2(n-1)$ vectors $e_1\pm e_i$, $i=2,\dots,n$: if
$\epsilon_i\in\{1,-1\}$ is such that $\epsilon_id_i=|d_i|$, then
\begin{equation}\label{eq:Kngen}
d=\sum_{i=2}^n|d_i|\,(e_1+\epsilon_ie_i)+\Bigl(d_1-\sum_{i=2}^n|d_i|\Bigr)e_1 ,
\qquad e_1=\tfrac12(e_1+e_2)+\tfrac12(e_1-e_2).
\end{equation}

\begin{theorem}\label{thm:main}
Let $A\in\Snp$ have eigenvalues $\lambda_1\ge\lambda_2\ge\dots\ge\lambda_n$, and let
$d=(d_1,\dots,d_n)\in\Kn$. Then there is a matrix $B\in\Sn$ with eigenvalues $\lambda_1+d_1,\lambda_2+d_2,\dots,\lambda_n+d_n$
such that $B\ge A$ entrywise. In particular $B\in\Snp$.
\end{theorem}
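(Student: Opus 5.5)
The plan is to build $B$ as the endpoint of a monotone path $A(t)$, $t\in[0,1]$, in $\Snp$ with $A(0)=A$, $A'(t)\ge 0$ entrywise, and eigenvalues $\lambda_k(t)=\lambda_k+t\,d_k$ for each $k$. Entrywise monotonicity then gives $B=A(1)\ge A$.

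First I reduce to a local statement. I will show the conclusion for every $A\in\Snp$ and every $d$ in a neighbourhood of $0$ in $\Kn$ (a small multiple of each direction suffices, since $\Kn$ is a cone). I do not order $\lambda_2,\dots,\lambda_n$ and only use $\lambda_1=\rho(A)$. The hypothesis $d\in\Kn$ gives $\lambda_1+d_1\ge\lambda_i+d_i$ and $\lambda_1+d_1\ge-(\lambda_i+d_i)$, so the new list again has its Perron value first. Hence moves can be iterated. For a fixed $d$, let $T$ be the supremum of the $t$ for which $\lambda+t d$ is attained by some $B_t\ge A$ with $B_t\in\Snp$. A compactness argument, using $\|B_t\|_F^2=\sum_k(\lambda_k+td_k)^2$ and closedness of $\{B\ge A\}$, shows the supremum is attained. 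Applying the local statement at $B_T$ shows $T$ cannot be less than $1$.

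The local statement is proved to first order, which is where the paper's theorem-of-the-alternative viewpoint enters. Let $Q=[q_1,\dots,q_n]\in\On$ diagonalize $A$, with $q_1=u\ge 0$ a Perron vector. By Hellmann–Feynman, perturbing $A$ by $tM$ moves the eigenvalues (when simple) at rates $q_k^TMq_k$. So I need $M\in\Snp$ with $\bigl(q_k^TMq_k\bigr)_{k=1}^n=d$, or more robustly with $Q^TMQ=D$ exactly for a symmetric $D$ whose diagonal is $d$. Then $A+tM=Q(\Lambda+tD)Q^T$, and I would choose $D$ with appropriate off-diagonal part so that the spectrum of $\Lambda+tD$ is exactly $\lambda+td$.

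By Farkas' lemma in $\Sn$ (the cone $\Snp$ is self-dual for $\ip{\cdot}{\cdot}$), the existence of $M\ge0$ with $\diag(Q^TMQ)=d$ for all $d\in\Kn$ is equivalent to the following dual statement. If $c\in\R^n$ and $C=\sum_k c_kq_kq_k^T$ satisfies $C\ge0$ entrywise, then $\ip{c}{d}\ge0$ for all $d\in\Kn$, i.e.\ $c_1\ge|c_k|$ for all $k$. The natural argument is $|c_k|=|q_k^TCq_k|\le |q_k|^TC|q_k|\le\rho(C)$. The catch is that $\rho(C)$ equals $c_1=u^TCu$ only when $u$ is a Perron vector of $C$.

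This is the step I expect to be the main obstacle. For a nonnegative $C$ commuting with $A$, $u$ need not be its Perron vector when $A$ is reducible or $\lambda_1$ is multiple. My first attempt is to perturb $A$ to an irreducible matrix with simple spectrum and choose $u$ among Perron vectors maximising $u^TCu$. Failing that, I would give up exact diagonal matching and allow the off-diagonal entries of $D$ to be nonzero, using the extra freedom of the full matrix $Q^TMQ$ together with the $2\times2$ blocks in the span of $u,q_i$ that realise the generators $e_1\pm e_i$ from \eqref{eq:Kngen}.

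The final step is to handle repeated eigenvalues, where Hellmann–Feynman fails. For this I would use the closedness and compactness argument above with generic approximants $A_\varepsilon\to A$, $A_\varepsilon\ge A$.
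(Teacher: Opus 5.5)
There is a genuine gap. Your supremum/compactness reduction is sound, but it moves the whole difficulty into the local statement: exact spectrum $\lambda+\epsilon d$ \emph{and} exact domination $B'\ge B_T$. The first-order information you derive cannot deliver this.

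\textbf{The straight line fails.} If $\diag(Q^TMQ)=d$ and $A+tM$ has spectrum $\lambda+td$ for some $t>0$, compare $\tr(A+tM)^2$ with $\sum_k(\lambda_k+td_k)^2$. This gives $\sum_{k\ne j}(Q^TMQ)_{kj}^2=0$, so $M=Q\diag(d)Q^T$, which is generally not nonnegative. No choice of the off-diagonal part of $D$ can help.

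\textbf{Rotation leaves a second-order loss.} The eigenvectors must rotate, so one uses curves $e^{sK}Q(\Lambda+s\diag(d))Q^Te^{-sK}=B+sX+O(s^2)$ with $X\ge 0$. Any entry where $X$ vanishes may then decrease at order $s^2$, and such zeros are unavoidable. For trace-preserving $d$ one has $\tr X=\sum_i d_i=0$, so $X$ has zero diagonal. In fact any $B'\ge B_T$ with spectrum $\lambda+(T+\epsilon)d$ must have \emph{exactly} the diagonal of $B_T$, an equality constraint that first-order data do not see.

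\textbf{Reducible matrices are worse.} For $A=\diag(1,0)$ and $d=(1,-1)$, every $B\ge A$ with spectrum $(1+\epsilon,-\epsilon)$ has $b_{12}=\sqrt{\epsilon+\epsilon^2}$. So no monotone path with bounded velocity starts at $A$, and the supremum argument must be run from a positive matrix.

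\textbf{The obstacle you flagged is not one.} For irreducible $A$, your $C=\sum_kc_kq_kq_k^T$ satisfies $Cq_1=c_1q_1$ with $q_1>0$. A nonnegative matrix with a positive eigenvector has that eigenvalue equal to its spectral radius, so $c_1=\rho(C)\ge|c_k|$ at once. This is exactly the paper's Lemma~\ref{lem:tangent}, which is phrased with $\V_B$ so that simple spectrum is not needed.

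\textbf{What is missing} is a device that integrates first-order information despite $O(s^2)$ losses. The paper does this as follows.
\begin{enumerate}
\item[(i)] It assumes $A>0$ and follows the slack function $h(t)=\max_{Q\in\On}\min_{p,q}\bigl(QD(t)Q^T-A\bigr)_{pq}$.
\item[(ii)] This parametrization builds the exact spectrum in, so no Hellmann--Feynman argument or simple-spectrum assumption is needed. Generic simplicity could not be maintained anyway, since the targets $\lambda_k+td_k$ may cross as $t$ varies.
\item[(iii)] It only requires $h(t+s)\ge h(t)-Cs^2$, which the rotated curve does give, and the Dini-derivative Lemma~\ref{lem:dini} then shows that $h$ does not decrease.
\item[(iv)] While $h(t)\ge 0$, every maximiser satisfies $B\ge A>0$, so irreducibility, and hence the first-order lemma, is self-sustaining.
\end{enumerate}
Your final approximation step ($A_\varepsilon\ge A$ plus compactness) matches the paper's Step~2 and is fine.
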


\begin{corollary}[symmetric version of Guo's result]\label{cor:guo}
Let $(\lambda_1,\dots,\lambda_n)$ be the spectrum of a matrix $A\in\Snp$ with Perron
eigenvalue $\lambda_1$, let $j\in\{2,\dots,n\}$ and $t\ge 0$. Then both lists
\[
(\lambda_1+t,\dots,\lambda_j-t,\dots,\lambda_n)\qquad\text{and}\qquad
(\lambda_1+t,\dots,\lambda_j+t,\dots,\lambda_n)
\]
are spectra of matrices $B\in\Snp$ with $B\ge A$. In the first case $B$ has the same diagonal as $A$.
\end{corollary}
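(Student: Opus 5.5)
The corollary should follow directly from Theorem~\ref{thm:main} by choosing $d=t(e_1-e_j)$ or $d=t(e_1+e_j)$. The only extra work is to reconcile the ordering conventions and to read off the diagonal statement from a trace argument.

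\emph{Ordering.} The theorem assumes the eigenvalues are listed in nonincreasing order, while the corollary only assumes that $\lambda_1$ is the Perron eigenvalue. Since $A\in\Snp$, we have $\lambda_1=\rho(A)\ge\lambda_i$ for all $i$, so $\lambda_1$ is a largest eigenvalue. Choose a permutation $\pi$ of $\{2,\dots,n\}$ that sorts $\lambda_2,\dots,\lambda_n$ into nonincreasing order. Let $k$ be the position that $\lambda_j$ occupies in the sorted list, with ties broken arbitrarily since only the multiset of eigenvalues matters. Then $\lambda_1$ followed by the sorted list is the ordered spectrum of $A$ in the sense of Theorem~\ref{thm:main}. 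Define $d=t(e_1\mp e_k)$; this lies in $\Kn$ because $d_1=t=|d_k|$ and $t\ge0$. Theorem~\ref{thm:main} then gives $B\in\Sn$ with $B\ge A$, so $B\in\Snp$. Its spectrum is the sorted list with $t$ added to the first entry and $\mp t$ added to the $k$-th entry. Undoing $\pi$, this is exactly $(\lambda_1+t,\dots,\lambda_j\mp t,\dots,\lambda_n)$ as a multiset.

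\emph{Perron position.} For the list to be written in the paper's convention, $\lambda_1+t$ must be $\rho(B)$. Since $B\in\Snp$, $\rho(B)$ is the largest eigenvalue of $B$, so it suffices to check that $\lambda_1+t$ is the largest entry of the new list. For $i\ne 1,j$ we have $\lambda_1+t\ge\lambda_1\ge\lambda_i$. For the $j$-th entry, $\lambda_1+t\ge\lambda_j+t$ and $\lambda_1+t\ge\lambda_j-t$ both follow from $\lambda_1\ge\lambda_j$. In fact $\lambda_1\ge|\lambda_j|$ by \eqref{eq:nec}, so $\lambda_1+t\ge|\lambda_j\pm t|$ as well.

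\emph{Diagonal.} In the first case $d_1+\dots+d_n=0$, so $\tr B=\tr A$. Since $B\ge A$ entrywise, $b_{ii}\ge a_{ii}$ for every $i$. Together with $\sum_i b_{ii}=\sum_i a_{ii}$, this forces $b_{ii}=a_{ii}$ for all $i$, so $B$ has the same diagonal as $A$.

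There is no real obstacle here; all the substance lies in Theorem~\ref{thm:main}. The only point needing care is the reindexing, which is harmless because $\Kn$ is invariant under permutations of coordinates $2,\dots,n$.
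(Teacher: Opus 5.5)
Your proposal is correct and matches the paper's proof: apply Theorem~\ref{thm:main} with $d=t(e_1\mp e_j)$, then combine $\tr B=\tr A$ with $b_{ii}\ge a_{ii}$ to force equal diagonals in the first case. The reindexing and Perron-position checks you add are the same bookkeeping the paper does in the paragraph after \eqref{eq:RnKn}, where it uses that $\Kn$ is invariant under permutations of coordinates $2,\dots,n$.
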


\begin{proof}
Apply Theorem~\ref{thm:main} with $d=t(e_1\mp e_j)$. In the first case $\tr B=\tr A$ and
$b_{ii}\ge a_{ii}$ for all $i$, hence $b_{ii}=a_{ii}$.
\end{proof}

In terms of the sets introduced above, Theorem~\ref{thm:main} implies that
\begin{equation}\label{eq:RnKn}
\Rn+\Kn\subseteq\Rn .
\end{equation}
Indeed, let $\sigma\in\Rn$ and $d\in\Kn$. The set $\Kn$ is invariant under permutations of the
coordinates $2,\dots,n$, so, permuting these coordinates of $\sigma$ and $d$ simultaneously, we may
assume $\lambda_2\ge\dots\ge\lambda_n$. Theorem~\ref{thm:main}, applied to a matrix realizing $\sigma$,
gives $B\in\Snp$ with eigenvalues $\lambda_i+d_i$. Since $\lambda_1\ge\lambda_i$ and $d_1\ge d_i$ for
all $i$, the largest of them is $\lambda_1+d_1$, so $\lambda_1+d_1=\rho(B)$ and $\sigma+d\in\Rn$.
Corollary~\ref{cor:guo} is the special case
$\sigma+t(e_1\pm e_j)\in\Rn$ for $\sigma\in\Rn$, $j\ge 2$ and $t\ge 0$.
Moreover, if a list $\sigma$ is symmetrically realizable with diagonal entries $(a_1,\dots,a_n)$,
then $\sigma+t(e_1-e_j)$ is symmetrically realizable with the same diagonal entries; compare Problem~2
in \cite{LS}, and \cite{Sm} for related results on diagonal entries in the non-symmetric case.

The proof of Theorem~\ref{thm:main}, given in Sections~\ref{sec:prelim} and~\ref{sec:proof}, is not
constructive. A theorem of the alternative (Lemma~\ref{lem:alternative}) combined with the
Perron--Frobenius theorem shows that an irreducible matrix in $\Snp$ can be rotated, to first order,
so that its spectrum moves in a prescribed direction $d\in\Kn$ while none of its entries decreases
(Lemma~\ref{lem:tangent}). For a positive matrix $A$ we then follow the largest value $h(t)$ of the
smallest entry of $B-A$ over all symmetric $B$ with eigenvalues $\lambda_i+td_i$, and a classical
lemma on Dini derivatives (Lemma~\ref{lem:dini}) turns the first-order information into the statement
that $h$ does not decrease. The general case follows by approximation.

By Theorem~\ref{thm:main}, the set $\Rn$ is invariant under addition of vectors from $\Kn$. In
Section~\ref{sec:extreme} we study the realizable lists from which no such move can be undone: we call
$\sigma\in\Rn$ \emph{Guo-extreme} if $\sigma-t(e_1\pm e_i)\notin\Rn$ for all $i\ge 2$ and $t>0$
(Definition~\ref{def:extreme}). Every list in $\Rn$ is a Guo-extreme list plus a vector from $\Kn$
(Proposition~\ref{prop:decomposition}), so the SNIEP is equivalent to the description of the
Guo-extreme lists. A simple criterion (Proposition~\ref{prop:criterion}) shows that, for example, the
spectra of odd cycles of length at least $5$ are Guo-extreme.

Our second main result, Theorem~\ref{thm:certificate}, is a certificate for lists that are \emph{not}
Guo-extreme. It is modelled on a theorem of Laffey \cite[Theorem~4]{L98}, which states that for every
extreme symmetric matrix $A$ (one whose spectrum cannot be shifted by $-tI$) there is a nonzero matrix
$Y\in\Snp$ that commutes with $A$ and vanishes wherever $A$ is positive. For $A\in\Snp$ let $\YA$ be the
cone of all such $Y$. Since $I\in\YA$ whenever $A$ has zero diagonal, Laffey's condition $\YA=\{0\}$ is
never available for matrices of trace zero, whose spectra are always extreme. Theorem~\ref{thm:certificate}
allows $\YA$ to contain the multiples of $I$: if $A$ is irreducible and every matrix in $\YA$ is a
multiple of $I$, then for every $d\in\R^n$ with $\sum_id_i=0$ (for every $d\in\R^n$, if $A$ has a
positive diagonal entry) the list $\sigma+sd$ is symmetrically realizable for all sufficiently small
$s\ge 0$, where $\sigma$ is the spectrum of $A$. In particular all trace-preserving Guo moves
$\sigma\mapsto\sigma-s(e_1-e_i)$ can be reversed, and $\sigma$ is not Guo-extreme. The hypothesis is
a linear feasibility condition on $A$. Its proof uses Lemma~\ref{lem:alternative} again, now in the
form of Motzkin's transposition theorem, together with an implicit function argument
(Lemma~\ref{lem:curve}) that keeps the diagonal equal to zero.

Combining Theorems~\ref{thm:main} and~\ref{thm:certificate} with a perturbation argument, we obtain
two characterizations. A list $\sigma\in\Rn$ is the spectrum of a positive symmetric matrix if and only
if $\sigma-d\in\Rn$ for some $d$ in the interior of $\Kn$, if and only if $\sigma$ is not Perron
extreme in the sense of \cite{LS} (Proposition~\ref{prop:perron}); together with an observation of
Loewy and Spector \cite{LSp}, this shows that $\YA\ne\{0\}$ whenever the spectrum of $A$ is Perron
extreme, which contains Laffey's theorem. Similarly, $\sigma$ is the spectrum of a matrix in $\Snp$
with positive off-diagonal entries if and only if all trace-preserving Guo moves can be reversed
simultaneously (Proposition~\ref{prop:offdiag}). The list $(1,0,-1)$ shows that reversing a single
move is weaker (Example~\ref{ex:path}).

\section{Preliminary lemmas}\label{sec:prelim}

The first lemma is a theorem of the alternative, stated in the form in which it is used in the proofs
of Lemma~\ref{lem:tangent} and Theorem~\ref{thm:certificate}. Part (a) is a form of Farkas'
lemma and part (b) a form of Motzkin's transposition theorem; see \cite[Chapter~7]{S}. For $B\in\Sn$
let
\[
\V_B=\{KB-BK:\ K\in\R^{n\times n},\ K^T=-K\},
\]
a linear subspace of $\Sn$. We index the entries of a symmetric matrix by the set
$\Pi=\{(p,q):\ 1\le p\le q\le n\}$ of positions.

\begin{lemma}\label{lem:alternative}
Let $B,M\in\Sn$, and let $\Pi=\mathcal{P}\cup\mathcal{E}\cup\mathcal{F}$ be a partition of the set of
positions into three disjoint sets, some of which may be empty. Let
\begin{align*}
\mathcal{C}_\ge&=\{W\in\Sn:\ w_{pq}\ge 0\ \text{for }(p,q)\in\mathcal{P},\ w_{pq}=0\ \text{for }
(p,q)\in\mathcal{E}\},\\
\mathcal{C}_>&=\{W\in\Sn:\ w_{pq}>0\ \text{for }(p,q)\in\mathcal{P},\ w_{pq}=0\ \text{for }
(p,q)\in\mathcal{E}\},\\
\mathcal{D}&=\{Y\in\Sn:\ y_{pq}\ge 0\ \text{for }(p,q)\in\mathcal{P},\ y_{pq}=0\ \text{for }
(p,q)\in\mathcal{F}\},
\end{align*}
where the entries in the positions not mentioned are arbitrary.
\begin{enumerate}
\item[(a)] If $(M+\V_B)\cap\mathcal{C}_\ge=\emptyset$, that is, if there is no $X\in M+\V_B$ with
$x_{pq}\ge 0$ for $(p,q)\in\mathcal{P}$ and $x_{pq}=0$ for $(p,q)\in\mathcal{E}$, then there is
$Y\in\mathcal{D}$ such that $BY=YB$ and $\ip{Y}{M}<0$.
\item[(b)] If $(M+\V_B)\cap\mathcal{C}_>=\emptyset$, that is, if there is no $X\in M+\V_B$ with
$x_{pq}>0$ for $(p,q)\in\mathcal{P}$ and $x_{pq}=0$ for $(p,q)\in\mathcal{E}$, then there is
$Y\in\mathcal{D}$ such that $BY=YB$ and $\ip{Y}{M}\le 0$; moreover, $\ip{Y}{M}<0$ or $y_{pq}\ne 0$
for some $(p,q)\in\mathcal{P}$.
\end{enumerate}
\end{lemma}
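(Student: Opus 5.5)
The plan is to reduce both parts to standard finite-dimensional theorems of the alternative in the Euclidean space $(\Sn,\ip{\cdot}{\cdot})$, and then to identify the orthogonal complement of $\V_B$. The key computation concerns that complement. For $K$ skew-symmetric and $Y\in\Sn$,
\[
\ip{Y}{KB-BK}=\tr(YKB)-\tr(YBK)=\tr\bigl(K(BY-YB)\bigr).
\]
Here $BY-YB$ is skew-symmetric, and $K$ ranges over all skew-symmetric matrices, so this vanishes for all $K$ if and only if $BY=YB$. Thus $\V_B^\perp=\{Y\in\Sn:\ BY=YB\}$.

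Next I would describe the dual cone of $\mathcal{C}_\ge$ under the trace inner product. Note that $\ip{Y}{W}=\sum_p y_{pp}w_{pp}+2\sum_{p<q}y_{pq}w_{pq}$, so the positive factor $2$ on off-diagonal positions is harmless. The dual cone is $\{Y:\ \ip{Y}{W}\ge0\ \forall W\in\mathcal{C}_\ge\}$. This gives the following conditions:
\begin{itemize}
\item $y_{pq}\ge0$ on $\mathcal{P}$;
\item $y_{pq}$ arbitrary on $\mathcal{E}$;
\item $y_{pq}=0$ on $\mathcal{F}$.
\end{itemize}
This set is exactly $\mathcal{D}$. Since $\mathcal{C}_\ge$ is polyhedral, all cones involved are closed and no closure issues arise.

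For (a), the condition $(M+\V_B)\cap\mathcal{C}_\ge=\emptyset$ says that $-M\notin \V_B-\mathcal{C}_\ge=\V_B+(-\mathcal{C}_\ge)$. This set is a finitely generated, hence closed, convex cone, being the sum of a subspace and a polyhedral cone. Separation (Farkas' lemma) gives $Y$ with $\ip{Y}{-M}>0$ and $\ip{Y}{Z}\le0$ for all $Z$ in the cone. Taking $Z\in\pm\V_B$ forces $Y\in\V_B^\perp$, i.e. $BY=YB$. Taking $Z=-W$ with $W\in\mathcal{C}_\ge$ forces $Y\in\mathcal{C}_\ge^*=\mathcal{D}$. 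Hence $\ip{Y}{M}<0$.

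For (b), the strict version, I would use Motzkin's transposition theorem. A direct route: if $\mathcal{P}=\emptyset$, then $\mathcal{C}_>=\mathcal{C}_\ge$ and (a) applies, giving $\ip{Y}{M}<0$. Otherwise, introduce an extra scalar variable. Emptiness of $(M+\V_B)\cap\mathcal{C}_>$ means there is no pair $(V,\tau)$ with $V\in\V_B$ and $\tau>0$ such that $\tau M+V\in\mathcal{C}_\ge$ and $(\tau M+V)_{pq}>0$ on $\mathcal{P}$. This uses homogenization: if $\tau>0$, divide by $\tau$, using that $\V_B$ is a subspace. The case $\tau=0$ must also be handled, so I would instead consider the system in $(V,\tau)$: $\tau\ge0$, $\tau M+V\in\mathcal{C}_\ge$, and the strict inequalities $\tau>0$ together with $(\tau M+V)_{pq}>0$ on $\mathcal{P}$. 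This system is infeasible. Motzkin's theorem (linear inequalities, strict part nonempty) then yields multipliers: a $Y$ in the dual cone and a scalar, with the strict-part multipliers not all zero. These are the multiplier $\mu\ge0$ on $\tau>0$ and the entries $y_{pq}$, $(p,q)\in\mathcal{P}$. Reading off the coefficients of $V$ gives $Y\in\V_B^\perp$, and the constraints give $Y\in\mathcal{D}$. The coefficient of $\tau$ gives $\ip{Y}{M}+\mu=0$, so $\ip{Y}{M}\le0$. Finally, nontriviality of the strict multipliers means $\mu>0$, which forces $\ip{Y}{M}<0$, or else $y_{pq}\ne0$ for some $(p,q)\in\mathcal{P}$. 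This is the stated dichotomy.

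The main obstacle is bookkeeping in (b): setting up the homogenized system so that Motzkin applies cleanly, with the equalities on $\mathcal{E}$ and the subspace $\V_B$ handled as unrestricted-sign multipliers. One must also confirm that the nontriviality clause translates exactly into the conclusion "$\ip{Y}{M}<0$ or $y_{pq}\neq0$ on $\mathcal{P}$". The identification $\V_B^\perp=\{Y:\ BY=YB\}$ and the dual cone computation are routine.
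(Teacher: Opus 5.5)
Your argument is correct. The preliminary facts match the paper essentially step for step: the orthogonal complement of $\V_B$ is the commutant of $B$, and $\mathcal{D}$ is the dual cone of $\mathcal{C}_\ge$. Part (a) also matches: the paper likewise rewrites the hypothesis as $M\notin\V_B+\mathcal{C}_\ge$, gets closedness from polyhedrality, and separates.

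Part (b) is where you diverge. The paper remarks that (b) is a form of Motzkin's transposition theorem but does not invoke it. Instead it properly separates the affine subspace $M+\V_B$ from the relatively open convex set $\mathcal{C}_>$, and obtains $\beta\le 0$ and $Y\in\mathcal{D}$ by rescaling $W$. It then reads the dichotomy off properness: if $\ip{Y}{M}=0$, then $M+\V_B$ lies in the separating hyperplane. Hence some $W\in\mathcal{C}_>$ has $\ip{Y}{W}\ne 0$, which forces $y_{pq}\ne 0$ for some $(p,q)\in\mathcal{P}$.

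You instead homogenize with $\tau$ and apply Motzkin to the system in $(K,\tau)$. Its strict rows are $\tau>0$ and $x_{pq}>0$ on $\mathcal{P}$, and its equality rows are $x_{pq}=0$ on $\mathcal{E}$. The multiplier $\mu$ of $\tau>0$ gives $\ip{Y}{M}=-\mu$, and nontriviality of the strict multipliers is exactly the stated alternative.

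What each route buys:
\begin{itemize}
\item Your version is more mechanical and makes (a) and (b) visibly the Farkas/Motzkin pair. The price is citing Motzkin and coordinatizing $\V_B$, most cleanly by the free skew-symmetric variable $K$.
\item The paper's version needs only separation theorems and no coordinates.
\end{itemize}

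Two cosmetic points. The special case $\mathcal{P}=\emptyset$ is unnecessary, since the row $\tau>0$ already keeps the strict part nonempty. And there is no ``case $\tau=0$'' left to handle once $\tau>0$ is imposed as a strict inequality.
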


The roles of $\mathcal{E}$ and $\mathcal{F}$ are dual to each other: on $\mathcal{E}$ the matrix $X$
must vanish and $Y$ is unrestricted, while on $\mathcal{F}$ the matrix $X$ is unrestricted and $Y$
must vanish. As shown in the proof, $\mathcal{D}$ is the dual cone of $\mathcal{C}_\ge$ (and of
$\mathcal{C}_>$).

\begin{proof}
We first note that, for $Y\in\Sn$, 
\begin{equation}\label{eq:L3C1}
\ip{Y}{V}=0\text{ for all }V\in\V_B \iff BY=YB.
\end{equation} 
Indeed,
$\ip{Y}{KB-BK}=\tr\bigl(Y(KB-BK)\bigr)=\tr\bigl(K(BY-YB)\bigr)$, the matrix $S=BY-YB$ is
skew-symmetric, and taking $K=S^T$ gives $\tr(S^TS)=0$, so $S=0$; the converse is clear.

The sets $\mathcal{C}_\ge$ and $\mathcal{C}_>$ are nonempty, convex and closed under multiplication by
positive scalars, and $\mathcal{C}_\ge$ is the closure of $\mathcal{C}_>$.
Write
$E^{(pq)}=E_{pq}+E_{qp}$ for $p<q$ and $E^{(pp)}=E_{pp}$, so that $\ip{Y}{E^{(pq)}}$ is a positive
multiple of $y_{pq}$. We claim that, for $Y\in\Sn$,
\begin{equation}\label{eq:L3C2}
\ip{Y}{W}\ge 0\ \text{for all }W\in\mathcal{C}_>\iff
\ip{Y}{W}\ge 0\ \text{for all }W\in\mathcal{C}_\ge\iff Y\in\mathcal{D}.
\end{equation}
The first equivalence holds because $\mathcal{C}_\ge$ is the closure of $\mathcal{C}_>$. If
$\ip{Y}{W}\ge 0$ on $\mathcal{C}_\ge$, then $Y\in\mathcal{D}$. Indeed,  for
$(p,q)\in\mathcal{P}$ we have $E^{(pq)}\in\mathcal{C}_\ge$, hence $y_{pq}\geq 0$, and for $(p,q)\in\mathcal{F}$ we have $\pm E^{(pq)}\in\mathcal{C}_\ge$, hence $y_{pq}=0$. Conversely, if
$Y\in\mathcal{D}$ and $W\in\mathcal{C}_\ge$, then $\ip{Y}{W}$ is a positive combination of the
products $y_{pq}w_{pq}$, which are nonnegative for $(p,q)\in\mathcal{P}$ and zero otherwise.

(a) Since $\V_B=-\V_B$, an $X$ as in (a) exists if and only if $M\in\V_B+\mathcal{C}_\ge$. Both
$\V_B$ and $\mathcal{C}_\ge$ are finitely generated convex cones, the latter by the matrices
$E^{(pq)}$ with $(p,q)\in\mathcal{P}$ and $\pm E^{(pq)}$ with $(p,q)\in\mathcal{F}$. By the
Minkowski--Weyl theorem a finitely generated convex cone is polyhedral \cite[Theorem~19.1]{R}, and a
sum of polyhedral convex sets is polyhedral \cite[Corollary~19.3.2]{R}; hence $\V_B+\mathcal{C}_\ge$ is
a closed convex cone. If $M\notin\V_B+\mathcal{C}_\ge$, the separation theorem for a closed convex set
and a point outside it \cite[Section~11]{R} gives $Y\in\Sn$ and $\alpha\in\R$ with
$\ip{Y}{M}<\alpha\le\ip{Y}{Z}$ for all $Z\in\V_B+\mathcal{C}_\ge$. Taking $Z=0$ gives $\alpha\le 0$,
hence $\ip{Y}{M}<0$. Moreover $\ip{Y}{Z}\ge 0$ for all $Z\in\V_B+\mathcal{C}_\ge$: if
$\ip{Y}{Z_0}<0$, then $\ip{Y}{\lambda Z_0}=\lambda\ip{Y}{Z_0}\to-\infty$ as $\lambda\to\infty$,
contradicting the lower bound $\alpha$. Applying this to $Z\in\V_B$, a subspace, gives $\ip{Y}{Z}=0$
and hence $BY=YB$ by \eqref{eq:L3C1}; applying it to $Z\in\mathcal{C}_\ge$ gives $Y\in\mathcal{D}$  by \eqref{eq:L3C2}.

(b) 
The set $\mathcal{C}_>$ is relatively open, being an open subset of the subspace
$\{W\in\Sn:\ w_{pq}=0\ \text{for }(p,q)\in\mathcal{E}\}$. If $M+\V_B$ and $\mathcal{C}_>$ are disjoint,
then, since two nonempty convex sets with disjoint relative interiors can be properly separated
\cite[Section~11]{R}, there are $Y\in\Sn$ and $\beta\in\R$ such that
\[
\ip{Y}{M+V}\le\beta\le\ip{Y}{W}\qquad\text{for all }V\in\V_B,\ W\in\mathcal{C}_>,
\]
and such that $M+\V_B$ and $\mathcal{C}_>$ are not both contained in the hyperplane
$\{U\in\Sn:\ \ip{Y}{U}=\beta\}$. Since $\V_B$ is a subspace, $\ip{Y}{V}=0$ for all $V\in\V_B$, so
$BY=YB$ by \eqref{eq:L3C1}, and $\ip{Y}{M}\le\beta$. Multiplying a fixed $W\in\mathcal{C}_>$ by a positive scalar tending to
$0$ gives $\beta\le 0$, and tending to $\infty$ gives $\ip{Y}{W}\ge 0$ for all $W\in\mathcal{C}_>$,
hence $Y\in\mathcal{D}$  by \eqref{eq:L3C2}. In particular $\ip{Y}{M}\le\beta\le 0$. Finally,
suppose that $\ip{Y}{M}=0$. Then $\beta=0$ and $M+\V_B$ lies in the hyperplane
$\{U:\ \ip{Y}{U}=0\}$, so by proper separation there is $W\in\mathcal{C}_>$ with $\ip{Y}{W}\ne 0$.
Since $w_{pq}=0$ for $(p,q)\in\mathcal{E}$ and $y_{pq}=0$ for $(p,q)\in\mathcal{F}$, the number
$\ip{Y}{W}$ is a linear combination of the products $y_{pq}w_{pq}$ with $(p,q)\in\mathcal{P}$, so
$y_{pq}\ne 0$ for some $(p,q)\in\mathcal{P}$.
\end{proof}

Lemma~\ref{lem:tangent} below is the heart of the argument, and we first explain what it says. Let $B\in\Snp$ have
the orthogonal diagonalization $B=Q\Lambda_BQ^T$, and let $d\in\R^n$ be the direction in which we want
to move the spectrum of $B$. For every skew-symmetric matrix $K$ the matrix $e^{sK}$ is orthogonal, so
\begin{equation}\label{eq:curve}
C_K(s)\;=\;e^{sK}\,Q\,\bigl(\Lambda_B+s\diag(d)\bigr)\,Q^Te^{-sK},\qquad s\ge 0,
\end{equation}
is a curve of symmetric matrices that starts at $C_K(0)=B$ and whose eigenvalues at time $s$ are exactly
the diagonal entries of $\Lambda_B+s\diag(d)$. Expanding in $s$ gives
\begin{equation}\label{eq:expansion}
C_K(s)\;=\;B+s\,\bigl(Q\diag(d)Q^T+KB-BK\bigr)+O(s^2).
\end{equation}
In the linear term, $Q\diag(d)Q^T$ moves the eigenvalues while keeping the eigenvectors fixed, and it
is in general not nonnegative; the commutator $KB-BK$ comes from rotating the eigenvectors and has no
effect on the spectrum. Lemma~\ref{lem:tangent} states that, for irreducible $B$ and $d\in\Kn$, the
linear term in \eqref{eq:expansion} is entrywise nonnegative for at least one
skew-symmetric $K$. This is the sense in which the lemma is a first-order version of
Theorem~\ref{thm:main}: along the curve $C_K$ for this $K$, an entry of $B$ can decrease only through
the $O(s^2)$ term. The lemma alone does not give $C_K(s)\ge 0$ for small $s>0$, because an entry that
vanishes both in $B$ and in the linear term is controlled by the second-order term.

\begin{lemma}[first-order version]\label{lem:tangent}
Let $B\in\Snp$ be irreducible, and let $Q=[q_1\ \cdots\ q_n]\in\On$ be such that $Q^TBQ$
is diagonal and $q_1>0$ is the Perron vector of $B$. Let $d\in\Kn$. Then there
is a skew-symmetric matrix $K$ such that
\[
X\;:=\;Q\,\diag(d)\,Q^T+KB-BK\;\ge\;0 .
\]
\end{lemma}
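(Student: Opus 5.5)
The plan is to argue by contradiction using the theorem of the alternative, Lemma~\ref{lem:alternative}(a), with the simplest partition: $\mathcal{P}=\Pi$ (all positions) and $\mathcal{E}=\mathcal{F}=\emptyset$, together with $M=Q\diag(d)Q^T$ and the given $B$. Then $\mathcal{C}_\ge=\Snp$ and $\mathcal{D}=\Snp$. The desired $X$ is exactly an element of $(M+\V_B)\cap\mathcal{C}_\ge$. So if no such $K$ exists, Lemma~\ref{lem:alternative}(a) gives a matrix $Y\in\Snp$ with $BY=YB$ and $\ip{Y}{M}<0$. The task is then to show that $\ip{Y}{M}\ge 0$ for every nonnegative symmetric $Y$ commuting with $B$, whenever $d\in\Kn$.

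\emph{Step 1: $q_1$ is an eigenvector of $Y$.} Let $\rho=\rho(B)$. Since $B$ is irreducible, $\rho$ is a simple eigenvalue with eigenvector $q_1>0$ (Perron--Frobenius). From $B(Yq_1)=YBq_1=\rho\,Yq_1$, the vector $Yq_1$ lies in the one-dimensional eigenspace spanned by $q_1$. Hence $Yq_1=\mu q_1$ for some $\mu\in\R$.

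\emph{Step 2: identify $\mu$ with $\rho(Y)$.} Because $Y\ge 0$ and $q_1>0$, we have $\mu\ge 0$. A nonnegative matrix having a positive eigenvector has its spectral radius as the corresponding eigenvalue, so $\mu=\rho(Y)$. Since $Y$ is symmetric, $\rho(Y)$ equals its operator $2$-norm. Therefore $|q_i^TYq_i|\le\mu$ for every unit vector $q_i$, while $q_1^TYq_1=\mu$.

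\emph{Step 3: compute the inner product.} We have
\[
\ip{Y}{M}=\tr\bigl(YQ\diag(d)Q^T\bigr)=\sum_{i=1}^n d_i\,q_i^TYq_i .
\]
Using Step~2 and $d_1\ge\sum_{i\ge 2}|d_i|$, this gives
\[
\ip{Y}{M}\ \ge\ d_1\mu-\sum_{i\ge2}|d_i|\,\mu\ \ge\ 0 .
\]
This contradicts $\ip{Y}{M}<0$, so the required skew-symmetric $K$ exists.

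The only genuinely delicate point is Step~1. It is where irreducibility enters: simplicity of the Perron eigenvalue forces the commuting matrix $Y$ to fix the Perron direction. Without irreducibility, $Y$ could mix $q_1$ with other eigenvectors belonging to $\rho$, and the bound in Step~3 would fail. Everything else is a direct application of Lemma~\ref{lem:alternative} together with the norm bound $|q^TYq|\le\rho(Y)$ for symmetric $Y$ and unit vectors $q$.
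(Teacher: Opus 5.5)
Your proposal is correct and follows the paper's proof essentially step for step. Both apply Lemma~\ref{lem:alternative}(a) with $\mathcal{P}=\Pi$ and use simplicity of $\rho(B)$ to obtain $Yq_1=\rho(Y)q_1$. Both then conclude with $\ip{Y}{M}=\sum_i d_i\,q_i^TYq_i\ge\bigl(d_1-\sum_{i\ge 2}|d_i|\bigr)\rho(Y)\ge 0$.
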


\begin{proof}
Write $M=Q\diag(d)Q^T$. We must show that $M+\V_B$ contains a nonnegative matrix. If it does
not, Lemma~\ref{lem:alternative}(a), applied with $\mathcal{P}=\Pi$ and
$\mathcal{E}=\mathcal{F}=\emptyset$, gives a matrix $Y\in\Snp$ that commutes with $B$ and satisfies
$\ip{Y}{M}<0$. We show that this is impossible.

Since $B$ is irreducible, $\rho(B)$ is a simple eigenvalue.
From $B(Yq_1)=YBq_1=\rho(B)\,Yq_1$ we get
$Yq_1=\mu q_1$ for some $\mu$. Since $Y\ge 0$ has the positive eigenvector $q_1$, the eigenvalue
$\mu$ is equal to $\rho(Y)$.
As $Y$ is symmetric, its
eigenvalues lie in $[-\rho(Y),\rho(Y)]$, so $|q_i^TYq_i|\le\rho(Y)$ for all $i$. Therefore
\[
\ip{Y}{M}=\tr\Bigl(Y\Bigl(\sum_{i=1}^n d_i\,q_iq_i^T\Bigr)\Bigr)=\sum_{i=1}^n d_i\,q_i^TYq_i\;\ge\;\Bigl(d_1-\sum_{i=2}^n|d_i|\Bigr)\rho(Y)\;\ge\;0,
\]
since $d\in \Kn$, contradicting $\ip{Y}{M}<0$.
\end{proof}

\begin{remark}
If $\sum_i d_i=0$ then $\tr X=0$, so the matrix $X\ge 0$ in Lemma~\ref{lem:tangent} has zero diagonal:
the diagonal entries of $C_K(s)$ in \eqref{eq:curve} differ from those of $B$ only by $O(s^2)$, and the
off-diagonal entries do not decrease to first order. The proof of
the lemma shows that the cone of
first-order spectral directions reachable by nonnegative perturbations is dual to the cone of
nonnegative symmetric matrices commuting with $B$.
\end{remark}

The last lemma is a classical fact about Dini derivatives; see for instance \cite{HT} and the
references given there. We include the short proof.

\begin{lemma}\label{lem:dini}
Let $\varphi\colon[a,b]\to\R$ be continuous and suppose that
$\liminf_{s\downarrow 0}\frac{\varphi(t+s)-\varphi(t)}{s}\ge 0$ for every $t\in[a,b)$. Then
$\varphi(b)\ge\varphi(a)$.
\end{lemma}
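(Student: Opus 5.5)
The plan is the standard proof by contradiction via a perturbed function, which turns the non-strict hypothesis $\liminf\ge 0$ into a strict one. Fix $\varepsilon>0$ and set
\[
\psi(t)=\varphi(t)+\varepsilon(t-a),\qquad t\in[a,b].
\]
Then $\psi$ is continuous, and for every $t\in[a,b)$ the hypothesis gives
\[
\liminf_{s\downarrow 0}\frac{\psi(t+s)-\psi(t)}{s}\;\ge\;\varepsilon\;>\;0.
\]
It suffices to prove $\psi(b)\ge\psi(a)$. This yields $\varphi(b)+\varepsilon(b-a)\ge\varphi(a)$, and letting $\varepsilon\downarrow 0$ gives the claim.

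To prove $\psi(b)\ge\psi(a)$, consider the set
\[
S=\{t\in[a,b]:\ \psi(\tau)\ge\psi(a)\ \text{for all }\tau\in[a,t]\}.
\]
This set contains $a$ and is an interval starting at $a$. By continuity of $\psi$ it is closed, so $S=[a,c]$ with $c=\max S$.

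Suppose, for contradiction, that $c<b$. Because the right lower Dini derivative of $\psi$ at $c$ is strictly positive, there is $\delta>0$ with $c+\delta\le b$ such that $\psi(c+s)>\psi(c)$ for all $s\in(0,\delta)$. Since $c\in S$, we have $\psi(c)\ge\psi(a)$, so $\psi\ge\psi(a)$ on $[a,c+\delta)$. Closedness of $S$ then puts $c+\delta\in S$, contradicting the maximality of $c$. Hence $c=b$, and in particular $\psi(b)\ge\psi(a)$.

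The only delicate point is the use of the strict inequality. With merely $\liminf\ge 0$ one cannot guarantee that $\psi$ stays at least $\psi(c)$ just to the right of $c$, so the $\varepsilon$-perturbation is essential. Everything else is routine continuity bookkeeping.
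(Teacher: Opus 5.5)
Your proof is correct and follows essentially the same route as the paper. Both use the perturbation $\psi(t)=\varphi(t)+\varepsilon(t-a)$ to make the right lower Dini derivative at least $\varepsilon>0$, and both derive a contradiction at a point $c<b$ defined by a supremum. The differences are cosmetic: you run continuous induction on $S=\{t:\psi\ge\psi(a)\text{ on }[a,t]\}$ for every $\varepsilon$ and then let $\varepsilon\downarrow 0$, whereas the paper assumes $\varphi(b)<\varphi(a)$, fixes one small $\varepsilon$, and takes $c=\sup\{t:\psi(t)\ge\psi(a)\}$.
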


\begin{proof}
Suppose $\varphi(b)<\varphi(a)$ and choose $\varepsilon>0$ so small that
$\psi(t)=\varphi(t)+\varepsilon(t-a)$ still satisfies $\psi(b)<\psi(a)$. Let
$c=\sup\{t\in[a,b]:\psi(t)\ge\psi(a)\}$. By continuity $\psi(c)\ge\psi(a)>\psi(b)$, so $c<b$, and
$\psi(t)<\psi(a)\le\psi(c)$ for $t\in(c,b]$. Hence
$\liminf_{s\downarrow0}(\psi(c+s)-\psi(c))/s\le 0$, whereas the hypothesis gives that this limit
inferior is at least $\varepsilon$. This is a contradiction.
\end{proof}

\section{Proof of Theorem~\ref{thm:main}}\label{sec:proof}

\subsection*{Outline}
We first describe the idea. As in the statement of Theorem~\ref{thm:main}, assume $A\in\Snp$ has eigenvalues $\lambda_1\ge\lambda_2\ge\dots\ge\lambda_n$, and 
$d=(d_1,\dots,d_n)\in\Kn$. Let $\Lambda=\diag(\lambda_1,\dots,\lambda_n)$ and
$D(t)=\Lambda+t\,\diag(d)$ for $t\ge 0$. The symmetric matrices with eigenvalues $\lambda_i+td_i$,
$i=1,\dots,n$, are exactly the matrices $QD(t)Q^T$ with $Q\in\On$, and they form a compact set. Instead
of constructing a path of matrices through these sets, we follow a single number: in Step~1 we define
$h(t)$ as the largest value of $\min_{p,q}(QD(t)Q^T-A)_{pq}$ over $Q\in\On$. Thus $h(t)$ measures how
close the best matrix with spectrum $D(t)$ comes to dominating $A$ entrywise, and $h(t)\ge 0$ means
precisely that some matrix with spectrum $D(t)$ dominates $A$. Theorem~\ref{thm:main} is the statement
$h(1)\ge 0$, and $h(0)\ge 0$ holds because $A$ itself has spectrum $D(0)$.

For positive $A$ we show that $h$ cannot become negative, in three stages.
\begin{enumerate}
\item[(i)] Let $B$ be a best matrix at time $t$. If $B$ is irreducible, Lemma~\ref{lem:tangent} provides
a curve \eqref{eq:curve} that starts at $B$, has spectrum $D(t+s)$ at time $s$, and whose linear term
\eqref{eq:expansion} is entrywise nonnegative.
\item[(ii)] Along this curve the entries of $B$ decrease at most by a term of order $s^2$. Since the best
matrix at time $t+s$ is at least as good as the point of the curve at time $s$, this gives the
\emph{local claim} $h(t+s)\ge h(t)-Cs^2$, with a constant $C$ that depends on $t$.
\item[(iii)] Lemma~\ref{lem:dini} converts this pointwise estimate, in which the constants are not
uniform in $t$, into the statement that $h$ does not decrease.
\end{enumerate}
Positivity of $A$ enters through the hypothesis of Lemma~\ref{lem:tangent}, and it does so in a
self-sustaining way. As long as $h(t)\ge 0$, a best matrix $B$ at time $t$ satisfies $B\ge A>0$, so it is
positive and Lemma~\ref{lem:tangent} applies; hence $h$ does not decrease, and so $h$ remains
nonnegative. 
 Comparing with $A$ rather than with the zero matrix is what closes the loop, and it gives the conclusion $B\ge A$
at no extra cost.

Because the maximum over $Q$ is taken afresh for each $t$, we never have to control an actual path
$t\mapsto B(t)$ of matrices; the price is that the proof is not constructive. Finally, Step~2 removes
the assumption $A>0$ by a compactness argument.

\subsection*{Step 1: positive $A$}
Assume first that $A>0$ and put $m=\min_{p,q}a_{pq}>0$. Let $\Lambda=\diag(\lambda_1,\dots,\lambda_n)$
and $D(t)=\Lambda+t\,\diag(d)$ for $t\ge 0$. Define
\[
g(Q,t)=\min_{p,q}\bigl(Q\,D(t)\,Q^T-A\bigr)_{pq},\qquad h(t)=\max_{Q\in\On}g(Q,t).
\]
The maximum is attained because $\On$ is compact and $g$ is continuous.

We need to know that $h$ is continuous.
Indeed, the $(p,q)$ entry of
$QD(t)Q^T-A$ is an affine function of $t$ with slope $(Q\diag(d)Q^T)_{pq}=\sum_i d_i\,q_{pi}q_{qi}$, and
since the rows of $Q$ are unit vectors the Cauchy--Schwarz inequality bounds this slope by
$\|d\|_\infty$ in absolute value. A minimum of finitely many functions with Lipschitz constant
$\|d\|_\infty$ has the same Lipschitz constant, so
$|g(Q,t)-g(Q,t')|\le\|d\|_\infty|t-t'|$ for every $Q\in\On$. If $Q$ is a maximiser of $g(\cdot,t)$, then
\[
h(t')\ \ge\ g(Q,t')\ \ge\ g(Q,t)-\|d\|_\infty|t-t'|\ =\ h(t)-\|d\|_\infty|t-t'| ,
\]
and exchanging the roles of $t$ and $t'$ gives $|h(t)-h(t')|\le\|d\|_\infty|t-t'|$; in particular
$h$ is continuous.

Taking $Q_0$ with $Q_0\Lambda Q_0^T=A$ shows $h(0)\ge 0$. We claim that $h(t)\ge 0$ for all $t\ge 0$. For $t=1$ a maximiser $Q$ then gives
$B=QD(1)Q^T\ge A$ with the required spectrum.

\smallskip\noindent\emph{Local claim.} If $h(t)>-m$, then there is $C>0$ with
$h(t+s)\ge h(t)-Cs^2$ for all $s\in[0,1]$.

\smallskip
Let $Q$ be a maximiser at $t$ and $B=QD(t)Q^T$. Then $B\ge A+h(t)J>0$, so $B$ is a positive matrix.
Since $\lambda_1\ge\lambda_i$ and $d_1\ge|d_i|\ge d_i$, the number $\lambda_1+td_1$ is the largest
diagonal entry of $D(t)$, so it equals $\rho(B)$ and $q_1$ is a corresponding eigenvector. As $B>0$,
this eigenvalue is simple and $q_1$ is positive up to sign by the Perron--Frobenius theorem.
Replacing $q_1$ by $-q_1$ does not change
$QD(t)Q^T$, so we may assume $q_1>0$. By Lemma~\ref{lem:tangent} there is a skew-symmetric $K$ with
$X=Q\diag(d)Q^T+KB-BK\ge 0$. Put $Q(s)=e^{sK}Q\in\On$. Then, as in \eqref{eq:curve} and
\eqref{eq:expansion} with $\Lambda_B=D(t)$,
\[
Q(s)D(t+s)Q(s)^T=e^{sK}\bigl(B+s\,Q\diag(d)Q^T\bigr)e^{-sK}=B+sX+R(s),
\]
where every entry of $R(s)$ is bounded by $Cs^2$ for $s\in[0,1]$, with $C$ depending on $B$, $K$ and $d$.
Hence
\[
h(t+s)\ \ge\ g(Q(s),t+s)\ \ge\ \min_{p,q}(B-A)_{pq}+s\min_{p,q}X_{pq}-Cs^2\ \ge\ h(t)-Cs^2 .
\]

\smallskip\noindent\emph{Conclusion of Step 1.} Suppose $h(t_1)<0$ for some $t_1>0$, and let
$t_0=\sup\{t\in[0,t_1]: h(t)\ge 0\}$. By continuity of $h$ we have $h(t_0)\ge 0$, so $t_0<t_1$, and,
again by continuity, we can choose $t_2\in(t_0,t_1]$ with $h>-m$ on $[t_0,t_2]$. We check that the hypotheses of
Lemma~\ref{lem:dini} hold for $\varphi=h$ on $[t_0,t_2]$. The function $h$ is continuous. Let
$t\in[t_0,t_2)$. Since $h(t)>-m$, the local claim gives $C>0$ with
\[
\frac{h(t+s)-h(t)}{s}\ \ge\ -Cs\qquad\text{for }0<s\le 1,
\]
and letting $s\downarrow 0$ gives $\liminf_{s\downarrow 0}\frac{h(t+s)-h(t)}{s}\ge 0$. The constant $C$
depends on $t$, but this does not matter, because the hypothesis of Lemma~\ref{lem:dini} is a
condition at each point separately. Hence Lemma~\ref{lem:dini} gives $h(t_2)\ge h(t_0)\ge 0$. This contradicts $h<0$ on
$(t_0,t_1]$.

\subsection*{Step 2: general $A\in\Snp$}
Let $A\in\Snp$ and $A_k=A+\tfrac1k J>0$, with eigenvalues
$\lambda^{(k)}_1\ge\dots\ge\lambda^{(k)}_n$. By Step~1 there are $B_k\in\Sn$ with $B_k\ge A_k\ge A$ and
eigenvalues $\lambda^{(k)}_i+d_i$, $i=1,\dots,n$. The ordered eigenvalues of a symmetric matrix depend continuously on the
matrix, 
so $\lambda^{(k)}_i\to\lambda_i$, and the Frobenius norms
$\|B_k\|_F^2=\sum_i(\lambda^{(k)}_i+d_i)^2$ are bounded. A convergent subsequence has a limit $B\in\Snp$
with $B\ge A$ and, by continuity of the spectrum, eigenvalues $\lambda_i+d_i$. \qed

\section{Guo-extreme lists}\label{sec:extreme}

Theorem~\ref{thm:main} says that the set $\Rn$ of symmetrically realizable lists is invariant under
adding vectors from the cone $\Kn$. This suggests singling out the realizable lists from which no such
move can be undone. In this section we assume $n\ge 2$. 

\begin{definition}\label{def:extreme}
A list $\sigma\in\Rn$ is \emph{Guo-extreme} if, for every $i\in\{2,\dots,n\}$ and every $t>0$, neither
$\sigma-t(e_1+e_i)$ nor $\sigma-t(e_1-e_i)$ belongs to $\Rn$. The set of Guo-extreme lists in $\Rn$ is
denoted by $\En$.
\end{definition}

The condition on the Perron eigenvalue, which is built into the definition of $\Rn$, excludes the
trivial case $t=\lambda_1-\lambda_i$ of the second move, in which the two eigenvalues are merely
interchanged. By part~(a) of the next proposition only small $t$ matter.

\begin{proposition}\label{prop:rays}
Let $\sigma\in\Rn$.
\begin{enumerate}
\item[(a)] For each $v\in\{e_1\pm e_i:\ i=2,\dots,n\}$ the set $\{t\ge 0:\ \sigma-tv\in\Rn\}$ is a
compact interval $[0,\tau_v]$. Hence $\sigma$ is Guo-extreme if and only if $\tau_v=0$ for all $v\in\{e_1\pm e_i:\ i=2,\dots,n\} $.
\item[(b)] $\sigma$ is Guo-extreme if and only if $\sigma-d\notin\Rn$ for every $d\in\Kn\setminus\{0\}$,
that is, if and only if $\sigma$ is minimal in $\Rn$ for the partial order defined by the cone $\Kn$.
\end{enumerate}
\end{proposition}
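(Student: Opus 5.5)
For part~(a), fix $v\in\{e_1\pm e_i\}$ and let $T_v=\{t\ge 0:\ \sigma-tv\in\Rn\}$. Since $\Rn$ is closed and $t\mapsto\sigma-tv$ is continuous, $T_v$ is closed, and $0\in T_v$. I will show that $T_v$ is bounded. Write $\mu_j$ for the entries of $\sigma-tv$. Its first entry is $\lambda_1-t$, so $\lambda_1-t\ge 0$ for $t\in T_v$, because a Perron eigenvalue is nonnegative. Hence $T_v\subseteq[0,\lambda_1]$.

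Next I show that $T_v$ is an interval. Take $t\in T_v$ and $0\le t'\le t$. Then
\[
\sigma-t'v=(\sigma-tv)+(t-t')v,
\]
and $(t-t')v\in\Kn$. By \eqref{eq:RnKn}, $\sigma-t'v\in\Rn$. So $T_v$ is a closed, bounded, down-closed subset of $[0,\infty)$ containing $0$, that is, $T_v=[0,\tau_v]$. The characterization then follows directly. By Definition~\ref{def:extreme}, $\sigma$ is Guo-extreme if and only if $T_v\cap(0,\infty)=\emptyset$ for every $v$, which is exactly the condition $\tau_v=0$ for all $v$.

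For part~(b), one direction is immediate. If $\sigma-d\notin\Rn$ for all $d\in\Kn\setminus\{0\}$, apply this to $d=t(e_1\pm e_i)$ with $t>0$, which lies in $\Kn\setminus\{0\}$; this gives Guo-extremality.

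For the converse, suppose $\sigma$ is Guo-extreme but $\sigma-d\in\Rn$ for some nonzero $d\in\Kn$. The plan is to use \eqref{eq:Kngen} to move part of $d$ back. Since $d\ne 0$, we have $d_1>0$, and \eqref{eq:Kngen} writes $d$ as a nonnegative combination of generators $e_1\pm e_i$ with at least one positive coefficient. Say $d=c\,v+d'$ with $v=e_1\pm e_i$, $c>0$ and $d'\in\Kn$. Then
\[
\sigma-cv=(\sigma-d)+d'\in\Rn
\]
by \eqref{eq:RnKn}, which contradicts $\tau_v=0$.

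The one point needing care is that \eqref{eq:Kngen} really yields such a split with $d'\in\Kn$. If some $d_i\ne 0$ with $i\ge 2$, take $v=e_1+\epsilon_ie_i$ and $c=|d_i|$; then $d'=d-cv$ has first entry $d_1-|d_i|$ and $i$-th entry $0$, so $d'\in\Kn$. Otherwise $d=d_1e_1$ with $d_1>0$, and we take $v=e_1+e_2$ and $c=d_1/2$. Then $d'=\tfrac{d_1}{2}(e_1-e_2)$, which lies in $\Kn$.

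Finally, minimality for the partial order $\sigma'\preceq\sigma\iff\sigma-\sigma'\in\Kn$ is just a restatement of the condition in~(b). The relation is a partial order because $\Kn$ is a pointed convex cone: $\Kn\cap(-\Kn)=\{0\}$, since $d_1>0$ on $\Kn\setminus\{0\}$.
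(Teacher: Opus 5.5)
Your proof is correct and follows the paper's argument essentially verbatim. In (a) you use down-closedness via $\Rn+\Kn\subseteq\Rn$, closedness from $\Rn$ being closed, and boundedness from $\lambda_1-t\ge 0$; in (b) you split $d=cv+d'$ with $c>0$ using \eqref{eq:Kngen}. Your explicit check that $d'\in\Kn$ (including the case $d=d_1e_1$) and your remark that $\Kn$ is pointed, so that $\preceq$ is a genuine partial order, supply details the paper leaves implicit.
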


\begin{proof}
(a) If $\sigma-tv\in\Rn$ and $0\le s\le t$, then $\sigma-sv=(\sigma-tv)+(t-s)v\in\Rn+\Kn\subseteq\Rn$
by \eqref{eq:RnKn}. So the set $\{t\ge 0:\ \sigma-tv\in\Rn\}$ is an interval containing $0$. It is closed because $\Rn$ is
closed, and bounded because the first coordinate $\lambda_1-t$ of $\sigma-tv$ must be nonnegative.

(b) Suppose $\sigma-d\in\Rn$ for some $d\in\Kn\setminus\{0\}$. By \eqref{eq:Kngen} we can write
$d=cv+d'$ with $c>0$, $v=e_1\pm e_i$ for some $i$, and $d'\in\Kn$. Then
$\sigma-cv=(\sigma-d)+d'\in\Rn$, so $\sigma$ is not Guo-extreme. The converse is clear, since
$t(e_1\pm e_i)\in\Kn\setminus\{0\}$ for $t>0$.
\end{proof}

\begin{proposition}\label{prop:decomposition}
$\Rn=\En+\Kn$. That is, every symmetrically realizable list is obtained from a Guo-extreme list by
adding a vector from the cone $\Kn$.
\end{proposition}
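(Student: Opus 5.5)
The inclusion $\En+\Kn\subseteq\Rn$ is immediate from \eqref{eq:RnKn}, since $\En\subseteq\Rn$. The real content is the reverse inclusion: for a given $\sigma\in\Rn$ we must find $d\in\Kn$ such that $\sigma-d$ is Guo-extreme.

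By Proposition~\ref{prop:rays}(b), this amounts to finding a minimal element of the set
\[
S=\{\tau\in\Rn:\ \sigma-\tau\in\Kn\}=\Rn\cap(\sigma-\Kn)
\]
for the partial order induced by $\Kn$. If $\tau$ is minimal in $S$, then it is minimal in all of $\Rn$. Indeed, if $\tau-d'\in\Rn$ with $d'\in\Kn$, then $\sigma-(\tau-d')=(\sigma-\tau)+d'\in\Kn$, because $\Kn$ is a convex cone. Hence $\tau-d'\in S$, and minimality in $S$ forces $d'=0$.

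To produce a minimal element, I minimize a linear functional that is strictly increasing for the cone order. The natural choice is the first coordinate $\tau\mapsto\tau_1$. Every $d\in\Kn\setminus\{0\}$ has $d_1>0$, so if $\tau'\preceq\tau$ with $\tau'\ne\tau$, then $\tau'_1<\tau_1$. Consequently any minimizer of $\tau_1$ over $S$ is minimal in $S$.

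It remains to check that the minimum is attained. The set $S$ is nonempty, since it contains $\sigma$. It is closed, because $\Rn$ is closed and $\sigma-\Kn$ is closed. It is also bounded. Every $\tau\in\Rn$ satisfies $|\tau_i|\le\tau_1$ by \eqref{eq:nec}. Moreover $\tau\in\sigma-\Kn$ gives $\tau_1\le\sigma_1$, because $d_1\ge0$ for all $d\in\Kn$. Together these give $\|\tau\|_\infty\le\sigma_1$. So $S$ is compact, and $\tau_1$ attains its minimum at some $\tau^*\in S$.

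Setting $d=\sigma-\tau^*\in\Kn$, we get $\sigma=\tau^*+d$ with $\tau^*\in\En$. The only point that needs care is the observation that minimality in $S$ implies minimality in $\Rn$; this relies on $\Kn$ being closed under addition. The compactness of $S$ follows from the bound $|\tau_i|\le\tau_1\le\sigma_1$.
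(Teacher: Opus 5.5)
Your proof is correct and follows essentially the same route as the paper: it minimizes the first coordinate over the compact set $\{\tau\in\Rn:\ \sigma-\tau\in\Kn\}$, uses $d_1>0$ on $\Kn\setminus\{0\}$ together with additivity of $\Kn$ to show the minimizer cannot be moved further down within $\Rn$, and concludes with Proposition~\ref{prop:rays}(b). The only cosmetic difference is in the boundedness step: you bound $\tau$ directly via $|\tau_i|\le\tau_1\le\sigma_1$, whereas the paper bounds $d=\sigma-\tau$ via $|d_i|\le d_1\le\lambda_1$.
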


\begin{proof}
The inclusion $\En+\Kn\subseteq\Rn$ is \eqref{eq:RnKn}. Conversely, let $\sigma\in\Rn$ and
\[
\mathcal{T}=\{\tau\in\Rn:\ \sigma-\tau\in\Kn\}.
\]
This set contains $\sigma$ and is closed. It is bounded: if $\tau=\sigma-d\in\mathcal{T}$, then
$\tau_1=\lambda_1-d_1\ge 0$ by \eqref{eq:nec}, so $|d_i|\le d_1\le\lambda_1$ for all $i$. Let
$\sigma^*\in\mathcal{T}$ be a point with the smallest first coordinate. If $\sigma^*-d'\in\Rn$ for some
$d'\in\Kn\setminus\{0\}$, then $\sigma-(\sigma^*-d')=(\sigma-\sigma^*)+d'\in\Kn$, so
$\sigma^*-d'\in\mathcal{T}$, and its first coordinate is smaller than that of $\sigma^*$ because
$d'_1>0$. This is impossible, so $\sigma^*\in\En$ by Proposition~\ref{prop:rays}(b), and
$\sigma=\sigma^*+(\sigma-\sigma^*)\in\En+\Kn$.
\end{proof}

Thus the symmetric nonnegative inverse eigenvalue problem is equivalent to the description of the
Guo-extreme lists. The following simple criterion shows that such lists exist in abundance.

\begin{proposition}\label{prop:criterion}
Let $\sigma=(\lambda_1,\dots,\lambda_n)\in\Rn$.
\begin{enumerate}
\item[(a)] If $\sum_{i=1}^n\lambda_i=0$, then $\sigma-t(e_1+e_i)\notin\Rn$ for every
$i\in\{2,\dots,n\}$ and every $t>0$.
\item[(b)] If $\sum_{i=1}^n\lambda_i^3=0$ and $i\in\{2,\dots,n\}$ satisfies $\lambda_1+\lambda_i>0$,
then $\sigma-t(e_1-e_i)\notin\Rn$ for every $t>0$.
\end{enumerate}
In particular, if
\[
\sum_{i=1}^n\lambda_i=0,\qquad \sum_{i=1}^n\lambda_i^3=0,\qquad \lambda_1+\lambda_i>0\ \text{ for }
i=2,\dots,n,
\]
then $\sigma$ is Guo-extreme.
\end{proposition}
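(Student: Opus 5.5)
The plan is to use the trace conditions in \eqref{eq:nec}, $\sum_i\lambda_i^k\ge 0$, with $k=1$ for part (a) and $k=3$ for part (b), and to check that the relevant move strictly decreases the corresponding power sum.

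For (a), suppose $\tau=\sigma-t(e_1+e_i)\in\Rn$ with $t>0$. The sum of the entries of $\tau$ is $\sum_j\lambda_j-2t=-2t<0$. This contradicts \eqref{eq:nec} with $k=1$, that is, $\tr\ge 0$ for a realizing matrix. So (a) is immediate.

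For (b), suppose $\tau=\sigma-t(e_1-e_i)\in\Rn$ with $t>0$. Then $\tau$ has entries $\lambda_1-t$ and $\lambda_i+t$, with all other entries unchanged. Put $a=\lambda_1$ and $b=\lambda_i$. The change in the third power sum is
\[
(a-t)^3+(b+t)^3-a^3-b^3=-3t(a^2-b^2)+3t^2(a+b)=3t(a+b)\bigl(t-(a-b)\bigr).
\]
Since $a+b>0$, this is negative exactly when $t<a-b$. For such $t$ we get $\sum_j\tau_j^3<0$, contradicting \eqref{eq:nec} with $k=3$.

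It remains to handle $t\ge a-b$. Here I would use the first condition in \eqref{eq:nec}: $\tau\in\Rn$ requires $\tau_1\ge|\tau_i|$, so $a-t\ge|b+t|\ge b+t$, which gives $t\le(a-b)/2<a-b$. The last inequality holds when $a>b$. If $a=b$, this forces $t\le 0$, so that case is excluded too. Alternatively, by Proposition~\ref{prop:rays}(a) the admissible set of $t$ is an interval $[0,\tau_v]$, so ruling out small $t$ already suffices. The only delicate point is this range issue, and the Perron condition in the definition of $\Rn$ disposes of it.

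The final statement follows by combining (a) for the moves $e_1+e_i$ with (b) for the moves $e_1-e_i$, for every $i\ge 2$, and then applying Definition~\ref{def:extreme}.
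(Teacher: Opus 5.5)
Your proof is correct and is essentially the paper's argument. Part (a) comes from the trace condition in \eqref{eq:nec}, and part (b) from the identity $(\lambda_1-t)^3+(\lambda_i+t)^3-\lambda_1^3-\lambda_i^3=-3t(\lambda_1+\lambda_i)(\lambda_1-\lambda_i-t)$ together with the Perron condition $\lambda_1-t\ge\lambda_i+t$, differing only in where you split the range of $t$; your alternative via Proposition~\ref{prop:rays}(a) is unnecessary (it rests on Theorem~\ref{thm:main} and by itself would not cover $\lambda_1=\lambda_i$), but your main argument does not need it.
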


\begin{proof}
(a) The sum of the entries of $\sigma-t(e_1+e_i)$ is $-2t<0$, so this list is not in $\Rn$ by
\eqref{eq:nec}.

(b) Let $t>0$ and $\sigma'=\sigma-t(e_1-e_i)$. If $t>(\lambda_1-\lambda_i)/2$, then the first entry
$\lambda_1-t$ of $\sigma'$ is smaller than its $i$-th entry $\lambda_i+t$, so $\sigma'\notin\Rn$. If
$0<t\le(\lambda_1-\lambda_i)/2$, then $\lambda_1-\lambda_i-t>0$, and the sum of the cubes of the
entries of $\sigma'$ is
\[
(\lambda_1-t)^3+(\lambda_i+t)^3-\lambda_1^3-\lambda_i^3
=-3t\,(\lambda_1+\lambda_i)(\lambda_1-\lambda_i-t)<0 ,
\]
so again $\sigma'\notin\Rn$ by \eqref{eq:nec}. The last statement follows from (a) and (b).
\end{proof}

\begin{example}\label{ex:trianglefree}
Let $A\in\Snp$ be irreducible with zero diagonal, and suppose that the graph of $A$ has no triangles and
is not bipartite. Then $\tr A=0$ and $\tr A^3=0$, and $-\rho(A)$ is not an eigenvalue of $A$ because
the graph is not bipartite \cite[Chapter~2]{BP}. By Proposition~\ref{prop:criterion} the spectrum of
$A$, with $\rho(A)$ listed first, is Guo-extreme. In particular the spectrum
$\bigl(2,\gamma,\gamma,-1-\gamma,-1-\gamma\bigr)$, $\gamma=\tfrac{\sqrt5-1}{2}$, of the $5$-cycle
\cite[Example~14]{LS} is Guo-extreme, and so are the spectra of all odd cycles of length at least $5$
and of the Petersen graph. The hypothesis $\lambda_1+\lambda_i>0$ in
Proposition~\ref{prop:criterion} cannot be dropped: the list $(1,-1)$ has zero sum and zero sum of
cubes, but $(1-t,-1+t)\in\mathcal{R}_2$ for $0\le t\le 1$.
\end{example}

The following consequence of Theorem~\ref{thm:main} is used twice below. Here $\|\cdot\|_F$ is the
Frobenius norm.

\begin{lemma}\label{lem:perturb}
Let $\sigma'=(\lambda'_1,\dots,\lambda'_n)\in\Rn$ with $\lambda'_2\ge\dots\ge\lambda'_n$, let
$A'\in\Snp$ realize $\sigma'$, and let $E\in\Snp$. Let $\mu_1\ge\dots\ge\mu_n$ be the eigenvalues of
$A'+E$ and $\eta=(\mu_1,\dots,\mu_n)-\sigma'$. Then
\[
|\eta_i|\le\|E\|_F\quad(i=1,\dots,n)\qquad\text{and}\qquad\sum_{i=1}^n\eta_i=\tr E .
\]
Moreover, if $d\in\R^n$ satisfies $d-\eta\in\Kn$, then there is $B\in\Sn$ with $B\ge A'+E$ and
eigenvalues $\lambda'_1+d_1,\dots,\lambda'_n+d_n$; in particular $\sigma'+d\in\Rn$.
\end{lemma}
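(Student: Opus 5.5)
The plan has three steps: two eigenvalue estimates for the perturbed matrix, then one application of Theorem~\ref{thm:main} to $A'+E$ with direction $d-\eta$.

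\emph{First claim.} The eigenvalues of $A'$ in decreasing order are $\lambda'_1\ge\lambda'_2\ge\dots\ge\lambda'_n$. This holds because $\lambda'_1=\rho(A')$ is the largest eigenvalue and $\lambda'_2\ge\dots\ge\lambda'_n$ by hypothesis. So $\eta_i$ is the difference between the $i$-th largest eigenvalue of $A'+E$ and the $i$-th largest eigenvalue of $A'$. Weyl's inequality then gives $|\eta_i|\le\|E\|_2\le\|E\|_F$, where $\|E\|_2$ is the spectral norm. The identity $\sum_i\eta_i=\tr(A'+E)-\tr A'=\tr E$ is immediate.

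\emph{Second claim.} The matrix $A'+E$ lies in $\Snp$, because both summands are nonnegative. Its eigenvalues are $\mu_1\ge\dots\ge\mu_n$. Put $d'=d-\eta\in\Kn$ and apply Theorem~\ref{thm:main} to $A'+E$ with direction $d'$. This yields $B\in\Sn$ with $B\ge A'+E$ and eigenvalues
\[
\mu_i+d'_i=\mu_i+d_i-\eta_i=\lambda'_i+d_i,\qquad i=1,\dots,n.
\]
Since $B\ge A'+E\ge 0$, we also have $B\in\Snp$.

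\emph{Third claim.} It remains to check that $\lambda'_1+d_1$ is the Perron eigenvalue of $B$. I would use the ordering $\mu_1\ge\mu_i$ together with $d'_1\ge|d'_i|\ge d'_i$. These give $\mu_1+d'_1\ge\mu_i+d'_i$, so $\lambda'_1+d_1$ is the largest eigenvalue of $B\in\Snp$. Hence it equals $\rho(B)$, and $\sigma'+d\in\Rn$.

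The only point needing care is this matching of indices. One has to confirm that the ordering convention of Theorem~\ref{thm:main} agrees with the sorted spectrum of $A'+E$, and that the first entry stays Perron after the move. Both follow from the sorting of the $\mu_i$ and the definition of $\Kn$, so there is no real obstacle.
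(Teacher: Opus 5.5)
Your proposal is correct and follows the paper's proof essentially step for step: the trace identity, the application of Theorem~\ref{thm:main} to $A'+E$ with direction $d-\eta$, and the Perron check via $\mu_1\ge\mu_i$ and $(d-\eta)_1\ge(d-\eta)_i$. The only difference is that you bound $|\eta_i|$ by Weyl's inequality, $|\eta_i|\le\|E\|_2\le\|E\|_F$, whereas the paper uses the Hoffman--Wielandt theorem $\sum_i\eta_i^2\le\|E\|_F^2$; both are valid and give the same estimate.
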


\begin{proof}
Since $\lambda'_1\ge\lambda'_2\ge\dots\ge\lambda'_n$ are the eigenvalues of $A'$ in decreasing order,
the theorem of Hoffman and Wielandt \cite[Section~6.3]{HJ} gives
$\sum_i\eta_i^2\le\|(A'+E)-A'\|_F^2=\|E\|_F^2$, which implies the first inequality, and
$\sum_i\eta_i=\tr(A'+E)-\tr A'=\tr E$. If $d-\eta\in\Kn$, then Theorem~\ref{thm:main}, applied to
$A'+E$ and $d-\eta$, gives $B\in\Sn$ with $B\ge A'+E\ge 0$ and eigenvalues
$\mu_i+d_i-\eta_i=\lambda'_i+d_i$. Since $\mu_1\ge\mu_i$ and $d-\eta\in\Kn$, the largest of them is
$\lambda'_1+d_1=\rho(B)$, so $\sigma'+d\in\Rn$.
\end{proof}

\subsection*{A certificate for lists that are not Guo-extreme}
We now give a condition on a realizing matrix $A$ which guarantees that its spectrum is not
Guo-extreme. It is modelled on a theorem of Laffey. Recall from \cite{L98} that a symmetrically
realizable list $\sigma$ is an \emph{extreme symmetric spectrum} if $\sigma-t(1,\dots,1)$ is not
symmetrically realizable for any $t>0$, and that a matrix in $\Snp$ with such a spectrum is an
\emph{extreme symmetric matrix}. Recall also from \cite{LS} that a list $\sigma\in\Rn$ is
\emph{Perron extreme} if $\sigma-te_1\notin\Rn$ for all $t>0$. Laffey proved \cite[Theorem~4]{L98} that for every extreme symmetric
matrix $A$ there is a nonzero nonnegative symmetric matrix $Y$ with $AY=YA$ and $y_{pq}a_{pq}=0$ for
all $p,q$. Equivalently: if no such $Y$ exists, then the spectrum of $A$ is not extreme. For
$A\in\Snp$ let
\[
\YA=\{Y\in\Snp:\ AY=YA,\ \tr(AY)=0\}.
\]
Since $\tr(AY)=\sum_{p,q}a_{pq}y_{pq}$ and all terms are nonnegative, the condition $\tr(AY)=0$ says
that $y_{pq}=0$ whenever $a_{pq}>0$. The set $\YA$ is a closed convex cone, and deciding whether it
contains a matrix that is not a multiple of $I$ is a linear feasibility problem. In this notation
Laffey's theorem says that $\YA\ne\{0\}$ for every extreme symmetric matrix $A$.

In fact the condition $\YA=\{0\}$ certifies more than non-extremality. As observed by Loewy and
Spector \cite[Theorem~1]{LSp}, the only property of an extreme symmetric matrix used in Laffey's proof
is that it is not orthogonally similar to a positive symmetric matrix. Thus, if $\YA=\{0\}$, then $A$
is orthogonally similar to a positive symmetric matrix. The spectra of positive symmetric matrices
are, in turn, precisely the lists that are not Perron extreme:

\begin{proposition}\label{prop:perron}
For $\sigma\in\Rn$ the following statements are equivalent.
\begin{enumerate}
\item[(a)] $\sigma$ is not Perron extreme.
\item[(b)] $\sigma-d\in\Rn$ for some $d$ in the interior of $\Kn$, that is, for some $d$ with
$d_1>\sum_{i=2}^n|d_i|$.
\item[(c)] $\sigma$ is the spectrum of a positive symmetric matrix.
\end{enumerate}
\end{proposition}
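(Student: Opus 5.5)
I will prove the cycle (c)$\Rightarrow$(b)$\Rightarrow$(a)$\Rightarrow$(c).

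\emph{(c)$\Rightarrow$(b).} Let $A'>0$ realize $\sigma$, and take $\epsilon>0$ smaller than every entry of $A'$. Put $A=A'-\epsilon J\ge 0$. Since $J$ is positive semidefinite of rank one with eigenvalue $n$, the eigenvalues of $A$ interlace: in decreasing order they are $\lambda_i-\eta_i$ with $\eta_i\ge 0$ and $\sum_i\eta_i=n\epsilon$. The list $\sigma-\eta$ (Perron entry first) lies in $\Rn$. If $\eta\in\operatorname{int}\Kn$ we are done, but this need not hold; for instance $\eta$ may be concentrated away from $e_1$. So I would instead use Lemma~\ref{lem:perturb} with $A'$ replaced by $A$ and $E=\epsilon J$. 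That gives $\sigma_A+d\in\Rn$ whenever $d-\eta\in\Kn$, which only moves upward. I need the reverse direction. The natural route is: $\sigma-\eta\in\Rn$, so it suffices to find $d\in\operatorname{int}\Kn$ with $\eta-d\in\Kn$, because then $\sigma-d=(\sigma-\eta)+(\eta-d)\in\Rn$ by \eqref{eq:RnKn}. This requires $\eta_1>\sum_{i\ge2}|\eta_i|$ strictly. The perturbation by $\epsilon J$ should shift mostly the Perron eigenvalue, since $q_1^TJq_1=(\mathbf 1^Tq_1)^2$ is large when $q_1>0$. I do not expect that to hold in general, though. A cleaner choice is to subtract $\epsilon\,q_1q_1^T$ with $q_1>0$ the Perron vector of $A'$, for $\epsilon$ small. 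This lowers only $\lambda_1$ by $\epsilon$ and keeps the matrix positive. So $\sigma-\epsilon e_1\in\Rn$, and then $d=\epsilon e_1\in\operatorname{int}\Kn$. This also gives (c)$\Rightarrow$(a) directly.

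\emph{(b)$\Rightarrow$(a).} If $\sigma-d\in\Rn$ with $d_1>\sum_{i\ge2}|d_i|$, set $t=d_1-\sum_{i\ge2}|d_i|>0$. Then $d-te_1\in\Kn$, so $\sigma-te_1=(\sigma-d)+(d-te_1)\in\Rn$ by \eqref{eq:RnKn}.

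\emph{(a)$\Rightarrow$(c).} This is the main step. Suppose $\sigma'=\sigma-te_1\in\Rn$ for some $t>0$, and reorder so that $\lambda'_2\ge\dots\ge\lambda'_n$. Let $A'$ realize $\sigma'$. Apply Lemma~\ref{lem:perturb} with $E=\epsilon J$, where $\epsilon>0$ is chosen so that $\|E\|_F=n\epsilon$ satisfies $n\cdot n\epsilon<t$. Then $|\eta_i|\le n\epsilon$. Take $d=te_1$, so that $d-\eta$ has first entry $t-\eta_1\ge t-n\epsilon$ and the remaining entries sum in absolute value to at most $(n-1)n\epsilon$. Hence $d-\eta\in\Kn$ once $n^2\epsilon\le t$. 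The lemma then gives $B\ge A'+\epsilon J>0$ with eigenvalues $\sigma'+te_1=\sigma$. So $B$ is a positive symmetric matrix realizing $\sigma$, which is (c).

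The only delicate points are making the Perron vector sign and the ordering convention in Lemma~\ref{lem:perturb} consistent, and checking that the Perron entry stays first. Both are handled as in the proof of Lemma~\ref{lem:perturb}.
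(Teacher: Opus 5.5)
Your proof is correct and rests on the same two ingredients as the paper's: subtracting $\epsilon q_1q_1^T$ from a positive realizing matrix, and Lemma~\ref{lem:perturb} with $E=\epsilon J$. The paper applies the latter in (b)$\Rightarrow$(c) for a general interior $d$; you apply it in (a)$\Rightarrow$(c) for $d=te_1$, run the cycle in the opposite direction, and add the one-line reduction (b)$\Rightarrow$(a) via \eqref{eq:RnKn}, which the paper does not need. The abandoned attempt with $A'-\epsilon J$ at the start of (c)$\Rightarrow$(b) should simply be deleted.
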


\begin{proof}
(a)$\Rightarrow$(b): If $\sigma-te_1\in\Rn$ with $t>0$, take $d=te_1$.

(b)$\Rightarrow$(c): Let $\sigma'=\sigma-d\in\Rn$ with $d$ in the interior of $\Kn$. Permuting the
coordinates $2,\dots,n$ of $\sigma$, $\sigma'$ and $d$ simultaneously, we may assume that
$\lambda'_2\ge\dots\ge\lambda'_n$. Let $A'\in\Snp$ realize $\sigma'$, let $\delta>0$, and apply
Lemma~\ref{lem:perturb} with $E=\delta J$. Then $|\eta_i|\le\|\delta J\|_F=n\delta$ for all $i$, so
$(d-\eta)_1-\sum_{i\ge 2}|(d-\eta)_i|\ge d_1-\sum_{i\ge 2}|d_i|-n^2\delta>0$ when $\delta$ is small
enough, that is, $d-\eta\in\Kn$. Lemma~\ref{lem:perturb} gives $B\in\Sn$ with $B\ge A'+\delta J>0$
and eigenvalues $\lambda'_i+d_i$, which are the entries of $\sigma$.

(c)$\Rightarrow$(a): Let $B$ be a positive symmetric matrix with spectrum $\sigma$ and unit Perron
vector $u>0$. For small $t>0$ the matrix $B-tuu^T$ is nonnegative, its eigenvalues are
$\lambda_1-t,\lambda_2,\dots,\lambda_n$, and $\lambda_1-t$ is the largest of them because $\lambda_1$ is
a simple eigenvalue of $B$. Hence $\sigma-te_1\in\Rn$.
\end{proof}

Consequently $\YA\ne\{0\}$ for every $A\in\Snp$ whose spectrum is Perron extreme. This contains
Laffey's theorem, because every extreme symmetric spectrum is Perron extreme: if $\sigma$ is the
spectrum of a positive symmetric matrix $B$, then $B-tI\in\Snp$ for small $t>0$, so $\sigma$ is not
extreme.

Note that $I\in\YA$ if and only if $A$ has zero diagonal; thus the certificate $\YA=\{0\}$ is never
available for matrices with zero trace, whose spectra are always extreme and Perron extreme.
Theorem~\ref{thm:certificate} below allows $\YA$ to contain the multiples of $I$, and still
concludes that the spectrum is not Guo-extreme.

In its proof we need a version of the implicit function theorem in which a partial derivative is
surjective but not necessarily invertible. It is a form of the submersion theorem: under the
assumptions of the following lemma the zero set of $F$ is, near $(0,0)$, a smooth submanifold whose
tangent space at $(0,0)$ is the kernel of the derivative of $F$; see for example
\cite[Chapter~5]{Lee}. We include a proof that uses only the usual implicit function theorem.

\begin{lemma}\label{lem:curve}
Let $\mathcal{X}$ and $\mathcal{Z}$ be finite-dimensional real vector spaces, and let $F$ be a smooth
map from a neighbourhood of $(0,0)$ in $\mathcal{X}\times\R$ to $\mathcal{Z}$ with $F(0,0)=0$. Let
$T\colon\mathcal{X}\to\mathcal{Z}$ be the partial derivative of $F$ with respect to the first variable
at $(0,0)$, and let $v=\frac{\partial F}{\partial s}(0,0)\in\mathcal{Z}$ be its partial derivative with
respect to the second variable. Suppose that $T$ is surjective and that $\xi\in\mathcal{X}$ satisfies
\[
T\xi+v=0 .
\]
Then there is a smooth curve $s\mapsto x(s)\in\mathcal{X}$, defined for small $|s|$, such that
\[
x(0)=0,\qquad x'(0)=\xi,\qquad F(x(s),s)=0 .
\]
\end{lemma}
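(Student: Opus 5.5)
The plan is to reduce the statement to the ordinary implicit function theorem by restricting $F$ to a complement of $\ker T$.

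First I choose the splitting. Let $\mathcal{N}=\ker T$ and pick a linear complement $\mathcal{W}$ with $\mathcal{X}=\mathcal{N}\oplus\mathcal{W}$. Since $T$ is surjective, $T|_{\mathcal{W}}\colon\mathcal{W}\to\mathcal{Z}$ is a linear isomorphism. Decompose $\xi=\xi_{\mathcal{N}}+\xi_{\mathcal{W}}$ accordingly. The hypothesis $T\xi+v=0$ then reads $T\xi_{\mathcal{W}}=-v$.

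Next I build a map to which the usual theorem applies. Define
\[
G(w,s)=F(s\,\xi_{\mathcal{N}}+w,\ s),
\]
a smooth map from a neighbourhood of $(0,0)$ in $\mathcal{W}\times\R$ to $\mathcal{Z}$, with $G(0,0)=0$. Its partial derivative in $w$ at $(0,0)$ is $T|_{\mathcal{W}}$, which is invertible. The usual implicit function theorem therefore gives a smooth curve $w(s)$, defined for small $|s|$, with $w(0)=0$ and $G(w(s),s)=0$. I then set
\[
x(s)=s\,\xi_{\mathcal{N}}+w(s).
\]
This gives $x(0)=0$ and $F(x(s),s)=0$ directly.

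It remains to compute $x'(0)$. Differentiating $G(w(s),s)=0$ at $s=0$ by the chain rule gives
\[
T w'(0)+T\xi_{\mathcal{N}}+v=0.
\]
Since $T\xi_{\mathcal{N}}=0$, this says $Tw'(0)=-v=T\xi_{\mathcal{W}}$. Both $w'(0)$ and $\xi_{\mathcal{W}}$ lie in $\mathcal{W}$, and $T$ is injective on $\mathcal{W}$, so $w'(0)=\xi_{\mathcal{W}}$. Hence
\[
x'(0)=\xi_{\mathcal{N}}+\xi_{\mathcal{W}}=\xi.
\]

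The only delicate point is prescribing the kernel component of the velocity, since the implicit function theorem alone only fixes the $\mathcal{W}$-component. Inserting the linear term $s\,\xi_{\mathcal{N}}$ into $G$ handles this, so I expect no real obstacle.
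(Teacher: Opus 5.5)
Your proof is correct and follows the paper's argument. Both restrict to a complement of $\ker T$, add a linear term in $s$ to prescribe the velocity, apply the ordinary implicit function theorem, and read off $x'(0)$ from injectivity of $T$ on the complement; the only difference is cosmetic: the paper inserts the full term $s\xi$, so its correction satisfies $S'(0)=0$, whereas you insert only $s\xi_{\mathcal{N}}$ and obtain $w'(0)=\xi_{\mathcal{W}}$ from $T\xi_{\mathcal{W}}=-v$.
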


\begin{proof}
The implicit function theorem requires an invertible partial derivative, and $T$ is surjective but in
general not injective. We therefore choose a subspace $\mathcal{S}\subseteq\mathcal{X}$ complementary
to the kernel of $T$, so that the restriction of $T$ to $\mathcal{S}$ is a bijection onto
$\mathcal{Z}$, and define
\[
\Psi(S,s)=F(s\xi+S,s)
\]
for $(S,s)$ in a neighbourhood of $(0,0)$ in $\mathcal{S}\times\R$. The map $\Psi$ is smooth,
$\Psi(0,0)=0$, and its partial derivative with respect to $S$ at $(0,0)$ is the restriction of $T$ to
$\mathcal{S}$, which is invertible. By the implicit function theorem \cite[Chapter~9]{Ru} there is a
smooth function $s\mapsto S(s)\in\mathcal{S}$, defined for small $|s|$, with
\[
S(0)=0\qquad\text{and}\qquad \Psi(S(s),s)=0 .
\]
Put $x(s)=s\xi+S(s)$, so that $F(x(s),s)=\Psi(S(s),s)=0$ for all small $|s|$. Write
$D_xF(x,s)\colon\mathcal{X}\to\mathcal{Z}$ for the partial derivative of $F$ with respect to the first
variable. Since $x'(s)=\xi+S'(s)$, the chain rule gives
\[
\frac{d}{ds}F(x(s),s)=D_xF(x(s),s)\bigl(x'(s)\bigr)+\frac{\partial F}{\partial s}(x(s),s).
\]
At $s=0$ we have $x(0)=0$, $D_xF(0,0)=T$ and $\frac{\partial F}{\partial s}(0,0)=v$. Since
$F(x(s),s)$ vanishes identically, its derivative at $s=0$ is zero, and therefore
\[
0=T\bigl(\xi+S'(0)\bigr)+v=T\bigl(S'(0)\bigr)+\bigl(T\xi+v\bigr)=T\bigl(S'(0)\bigr),
\]
where we used the assumption $T\xi+v=0$. Since $S'(0)\in\mathcal{S}$ and $T$ is injective on
$\mathcal{S}$, it follows that $S'(0)=0$. Hence $x(0)=S(0)=0$, $x'(0)=\xi+S'(0)=\xi$ and
$F(x(s),s)=0$, so the curve $x(s)$ has the required properties.
\end{proof}

In the curve $x(s)=s\xi+S(s)$ of this proof, the term $s\xi$ prescribes the first-order behaviour, and
$S(s)$ is a correction of order $s^2$.

\begin{theorem}\label{thm:certificate}
Let $A\in\Snp$ be irreducible with eigenvalues $\lambda_1=\rho(A),\lambda_2,\dots,\lambda_n$, let
$\sigma=(\lambda_1,\dots,\lambda_n)$, and suppose that every matrix in $\YA$ is a scalar multiple of
$I$.
\begin{enumerate}
\item[(a)] If $A$ has a positive diagonal entry, then for every $d\in\R^n$ there is $s_0>0$ such that
$\sigma+sd\in\Rn$ for $0\le s\le s_0$.
\item[(b)] If $A$ has zero diagonal, then for every $d\in\R^n$ with $\sum_jd_j=0$ there is $s_0>0$
such that $\sigma+sd\in\Rn$ for $0\le s\le s_0$.
\end{enumerate}
In both cases $\sigma-s(e_1-e_i)\in\Rn$ for every $i\in\{2,\dots,n\}$ and all sufficiently small $s>0$;
in particular $\sigma$ is not Guo-extreme.
\end{theorem}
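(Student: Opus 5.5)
The plan is to build, for small $s\ge 0$, an explicit curve of symmetric matrices with spectrum $\sigma+sd$ that stays entrywise nonnegative. We use the curve \eqref{eq:curve}, now based at $A$ itself. Let $A=Q\Lambda Q^T$ with $Q\in\On$, $\Lambda=\diag(\lambda_1,\dots,\lambda_n)$ in the order of $\sigma$, and $q_1>0$ the Perron vector, and put $M=Q\diag(d)Q^T$. The first-order term of the curve is $X=M+KA-AK\in M+\V_A$. To make the second-order remainder harmless we want $X$ to be \emph{strictly} positive at every position where $A$ vanishes. Since $A$ is irreducible, $\lambda_1$ is simple, so for small $s$ the first entry $\lambda_1+sd_1$ stays the largest and equals the Perron eigenvalue. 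It therefore suffices to produce a nonnegative matrix with eigenvalues $\lambda_i+sd_i$.

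\emph{Step 1: existence of $K$, via Lemma~\ref{lem:alternative}(b) with $B=A$.} Let $\mathcal{F}=\{(p,q):a_{pq}>0\}$. In case (a) let $\mathcal{P}$ be all positions with $a_{pq}=0$ (diagonal ones included) and $\mathcal{E}=\emptyset$. In case (b) let $\mathcal{P}$ be the off-diagonal positions with $a_{pq}=0$ and $\mathcal{E}$ the (all zero) diagonal positions. Suppose no $X\in M+\V_A$ has $x_{pq}>0$ on $\mathcal{P}$ and $x_{pq}=0$ on $\mathcal{E}$. Then Lemma~\ref{lem:alternative}(b) gives $Y\in\mathcal{D}$ with $AY=YA$ and $\ip{Y}{M}\le 0$, where moreover $\ip{Y}{M}<0$ or $y_{pq}\ne0$ for some $(p,q)\in\mathcal{P}$. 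We rule out both alternatives in each case.
\begin{itemize}
\item[(a)] Here $Y$ vanishes on $\mathcal{F}$ and is nonnegative elsewhere, so $Y\in\Snp$ and $\tr(AY)=0$, that is, $Y\in\YA$. Hence $Y=\beta I$. But $y_{pp}=0$ at a positive diagonal entry of $A$, so $Y=0$, a contradiction.
\item[(b)] Only the diagonal of $Y$ is unrestricted. For large $c$ the matrix $Y+cI$ is nonnegative, commutes with $A$, and vanishes where $A>0$, since the diagonal of $A$ is zero. Thus $Y+cI\in\YA$, so $Y=\beta I$. Then $\ip{Y}{M}=\beta\sum_jd_j=0$, and $y_{pq}=0$ on the off-diagonal set $\mathcal{P}$, again a contradiction.
\end{itemize}
So in both cases there is a skew-symmetric $K$ with $X$ strictly positive on $\mathcal{P}$ and, in case (b), zero diagonal.

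\emph{Step 2: case (a).} Take $C_K(s)=e^{sK}Q(\Lambda+s\diag(d))Q^Te^{-sK}=A+sX+O(s^2)$. Where $a_{pq}>0$ the entry stays positive for small $s$. Where $a_{pq}=0$ the entry is $sx_{pq}+O(s^2)$ with $x_{pq}>0$, which is positive for small $s>0$. Hence $C_K(s)\in\Snp$ for $0\le s\le s_0$.

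\emph{Step 3: case (b).} Here the diagonal has to stay exactly zero, since $O(s^2)$ errors on diagonal positions are not controlled. We apply Lemma~\ref{lem:curve}. Let $\mathcal{X}$ be the skew-symmetric matrices and $\mathcal{Z}=\{w\in\R^n:\sum w_i=0\}$. Set
\[
F(K',s)=\text{the diagonal of } e^{K'}Q(\Lambda+s\diag(d))Q^Te^{-K'},
\]
which takes values in $\mathcal{Z}$ because the trace is $\tr A+s\sum_jd_j=0$. We have $F(0,0)=0$, $T(K')=\operatorname{diag}(K'A-AK')$ and $v=\operatorname{diag}(M)$. With $\xi=K$ from Step~1 we get $T\xi+v=\operatorname{diag}(X)=0$.

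The main point to check is that $T$ is surjective onto $\mathcal{Z}$. Suppose $w\in\mathcal{Z}$ is orthogonal to the image of $T$. Then $\ip{\diag(w)}{\V_A}=0$, so $\diag(w)$ commutes with $A$ by \eqref{eq:L3C1}. This forces $w_p=w_q$ whenever $a_{pq}>0$. By irreducibility $w$ is constant, and a constant vector with zero sum is $0$.

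Lemma~\ref{lem:curve} then gives a curve $K(s)$ with $K(0)=0$ and $K'(0)=K$. The matrices $e^{K(s)}Q(\Lambda+s\diag(d))Q^Te^{-K(s)}$ have spectrum $\sigma+sd$, zero diagonal, and off-diagonal part $A+sX+O(s^2)$. The argument of Step~2 then shows they are nonnegative for small $s$.

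The final assertion follows from (b), or from (a) when the diagonal is positive somewhere, applied to $d=-(e_1-e_i)$, whose entries sum to zero. I expect the delicate points to be the choice of partition in Step~1, so that $\YA$ can absorb the diagonal freedom, and the surjectivity of $T$ in Step~3.
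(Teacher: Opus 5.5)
Your proposal is correct and follows essentially the same route as the paper. It uses the same partitions in Lemma~\ref{lem:alternative}(b), with the $Y+cI$ trick absorbing the unrestricted diagonal in case (b), the rotation curve $e^{sK}(A+sM)e^{-sK}$ in case (a), and, in case (b), Lemma~\ref{lem:curve} together with surjectivity of the map sending a skew-symmetric $L$ to the diagonal of $LA-AL$ onto the zero-sum vectors (your appeal to \eqref{eq:L3C1} for this is a compact form of the paper's direct computation).
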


\begin{proof}
Let $Q=[q_1\ \cdots\ q_n]\in\On$ with $Q^TAQ=\diag(\lambda_1,\dots,\lambda_n)$, let $d\in\R^n$ be as in
(a) or (b), and put $M=Q\diag(d)Q^T$. Let $\Ln$ be the space of skew-symmetric matrices of order $n$, and let
$\Zc=\{(p,q):\ p<q,\ a_{pq}=0\}$ be the set of off-diagonal zero positions of $A$.

\smallskip\noindent\emph{Step 1: a first-order direction.} We claim that there is $K\in\Ln$ such that
$X=M+KA-AK$ satisfies
\begin{equation}\label{eq:strict}
x_{pq}>0\ \text{ for }(p,q)\in\Zc,\qquad\text{and}\qquad
\begin{cases} x_{pp}>0\ \text{ whenever } a_{pp}=0, &\text{in case (a)},\\
x_{pp}=0\ \text{ for all } p, &\text{in case (b)}.\end{cases}
\end{equation}
We apply Lemma~\ref{lem:alternative}(b) with $B=A$ and the following partition of the set $\Pi$ of
positions. In case (a), $\mathcal{P}$ is the set of all positions $(p,q)\in\Pi$ with $a_{pq}=0$,
including diagonal ones, $\mathcal{E}=\emptyset$, and $\mathcal{F}$ is the set of positions with
$a_{pq}>0$. In case (b), $\mathcal{P}=\Zc$, $\mathcal{E}$ is the set of diagonal positions, and
$\mathcal{F}$ is the set of off-diagonal positions with $a_{pq}>0$. In both cases \eqref{eq:strict}
says exactly that $x_{pq}>0$ for $(p,q)\in\mathcal{P}$ and $x_{pq}=0$ for $(p,q)\in\mathcal{E}$.

Suppose that no such $K$ exists. Then Lemma~\ref{lem:alternative}(b) gives $Y\in\Sn$ with $AY=YA$,
$y_{pq}\ge 0$ for $(p,q)\in\mathcal{P}$, $y_{pq}=0$ for $(p,q)\in\mathcal{F}$ and $\ip{Y}{M}\le 0$,
such that
\begin{equation}\label{eq:alternatives}
 \ip{Y}{M}<0 \text{ or }y_{pq}\ne 0 \text{ for some }(p,q)\in\mathcal{P}.
 \end{equation}

In case (a), every position belongs to $\mathcal{P}$ or $\mathcal{F}$, so $Y$ is nonnegative and
vanishes wherever $A$ is positive; that is, $Y\in\YA$. Since $A$ has a positive diagonal entry,
$I\notin\YA$, so the hypothesis gives $\YA=\{0\}$ and $Y=0$. This contradicts both alternatives in \eqref{eq:alternatives}.

In case (b), choose $c\ge\max_p|y_{pp}|$. Then $Y+cI$ is nonnegative, commutes with $A$, and vanishes
wherever $A$ is positive, so $Y+cI\in\YA$. By hypothesis $Y+cI$ is a multiple of $I$, hence
$Y=\gamma I$ for some $\gamma\in\R$. Then $\ip{Y}{M}=\gamma\tr M=\gamma\sum_jd_j=0$, and $Y$ vanishes
at all off-diagonal positions, in particular on $\mathcal{P}=\Zc$. This again contradicts both
alternatives in \eqref{eq:alternatives}, and proves the claim.

\smallskip\noindent\emph{Step 2: case (a).} Let $C(s)=e^{sK}(A+sM)e^{-sK}$. As in \eqref{eq:curve} and
\eqref{eq:expansion}, $C(s)$ is symmetric with eigenvalues $\lambda_j+sd_j$, and $C(s)=A+sX+O(s^2)$.
An entry of $C(s)$ in a position where $A$ is positive remains positive for small $s$. In a position
where $A$ vanishes, the entry of $C(s)$ equals $s\,x_{pq}+O(s^2)$ with $x_{pq}>0$ by
\eqref{eq:strict}, so it is positive for small $s>0$. Hence $C(s)\in\Snp$ for $0\le s\le s_0$. Since $A$
is irreducible, $\lambda_1>\lambda_j$ for $j\ge 2$, so $\lambda_1+sd_1$ is the largest eigenvalue of
$C(s)$ for small $s$, and $\sigma+sd\in\Rn$.

\smallskip\noindent\emph{Step 3: case (b).} Now $X$ has zero diagonal, but the diagonal of $C(s)$ is
only $O(s^2)$, and we have to correct the curve so that its diagonal vanishes identically. Let
$\Hc=\{v\in\R^n:\ \sum_pv_p=0\}$ and define
\[
\Phi\colon\Ln\times\R\to\Hc,\qquad \Phi(L,s)=\text{the vector of diagonal entries of }
e^{L}(A+sM)e^{-L};
\]
its values lie in $\Hc$ because $\tr\bigl(e^L(A+sM)e^{-L}\bigr)=\tr A+s\tr M=0$. The map $\Phi$ is
smooth, $\Phi(0,0)=0$, and its derivative at $(0,0)$ is
\[
(L,\varsigma)\ \mapsto\ \text{the diagonal of }\ LA-AL+\varsigma M .
\]
The linear map $T\colon\Ln\to\Hc$ sending $L$ to the diagonal of $LA-AL$ is surjective. To see this,
let $w\in\Hc$ be orthogonal to the image of $T$; we show that $w=0$. Put $W=\diag(w)$. Since
$\sum_pw_px_{pp}=\tr(WX)$ for every matrix $X=(x_{pq})$, the assumption on $w$ says that
\[
0=\tr\bigl(W(LA-AL)\bigr)=\tr\bigl(L(AW-WA)\bigr)\qquad\text{for all }L\in\Ln .
\]
The matrix $AW-WA$ is skew-symmetric, because $A$ and $W$ are symmetric; taking $L=(AW-WA)^T$ gives
$AW=WA$. The $(p,q)$ entry of $AW$ is $a_{pq}w_q$ and that of $WA$ is $w_pa_{pq}$, so
\[
a_{pq}(w_q-w_p)=0\qquad\text{for all }p,q ,
\]
that is, $w_p=w_q$ whenever $a_{pq}\ne 0$. Since $A$ is irreducible, any two indices are joined by a
path in the graph of $A$, so all entries of $w$ are equal, say to $c$. As $w\in\Hc$, we get
$nc=\sum_pw_p=0$, hence $w=0$. Thus the orthogonal complement of the image of $T$ in $\Hc$ is zero,
and the image of $T$ is all of $\Hc$. 

We now apply Lemma~\ref{lem:curve} with $\mathcal{X}=\Ln$, $\mathcal{Z}=\Hc$, $F=\Phi$ and $\xi=K$.
The partial derivative of $\Phi$ with respect to $L$ at $(0,0)$ is $T$, which is surjective, and
$\frac{\partial\Phi}{\partial s}(0,0)$ is the diagonal of $M$. Hence
\[
T(K)+\frac{\partial\Phi}{\partial s}(0,0)=\text{the diagonal of }KA-AK+M=\text{the diagonal of }X ,
\]
which is zero by \eqref{eq:strict}. By the lemma there is a smooth curve $s\mapsto L(s)\in\Ln$,
defined for small $|s|$, with
\[
L(0)=0,\qquad L'(0)=K,\qquad \Phi(L(s),s)=0 .
\]

Put $B(s)=e^{L(s)}(A+sM)e^{-L(s)}$. Then:
\begin{itemize}
\item $B(s)$ is symmetric with eigenvalues $\lambda_j+sd_j$, $j=1,\dots,n$, because $e^{L(s)}$ is
orthogonal and $A+sM=Q\bigl(\diag(\lambda_1,\dots,\lambda_n)+s\diag(d)\bigr)Q^T$;
\item $B(s)$ has zero diagonal, because $\Phi(L(s),s)=0$;
\item $B(0)=A$ and $B'(0)=L'(0)A-AL'(0)+M=X$, so $B(s)=A+sX+O(s^2)$.
\end{itemize}
It remains to check that $B(s)$ is nonnegative for small $s>0$. Its diagonal is zero. An off-diagonal
entry in a position where $A$ is positive remains positive for small $s$ by continuity. An
off-diagonal entry in a position $(p,q)\in\Zc$ equals $s\,x_{pq}+O(s^2)$ with $x_{pq}>0$ by
\eqref{eq:strict}, so it is positive for small $s>0$. Since there are finitely many entries, there is
$s_0>0$ such that $B(s)\in\Snp$ for $0\le s\le s_0$. As in Step~2, $\lambda_1+sd_1$ is the largest
eigenvalue of $B(s)$ for small $s$, and therefore $\sigma+sd\in\Rn$.

\smallskip
The last statement follows by taking $d=e_i-e_1$.
\end{proof}

\begin{corollary}\label{cor:fullsupport}
Let $A\in\Snp$ have zero diagonal and positive off-diagonal entries, $n\ge 2$. Then the spectrum of $A$
is not Guo-extreme: with $\lambda_1=\rho(A)$, the list
$(\lambda_1-s,\lambda_2,\dots,\lambda_i+s,\dots,\lambda_n)$ is symmetrically realizable, by a matrix
with zero diagonal, for every $i\ge 2$ and all sufficiently small $s>0$.
\end{corollary}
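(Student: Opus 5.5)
This follows directly from Theorem~\ref{thm:certificate}(b). For $n\ge 2$, a matrix $A$ with positive off-diagonal entries is irreducible, and $A$ has zero diagonal by hypothesis. The one thing to check is the hypothesis on $\YA$: every matrix in $\YA$ must be a scalar multiple of $I$.

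So let $Y\in\YA$. Since $\tr(AY)=0$ and $Y\ge 0$, we have $y_{pq}=0$ whenever $a_{pq}>0$. This holds at every off-diagonal position, so $Y=\diag(y_{11},\dots,y_{nn})$ is diagonal. Next use $AY=YA$. Comparing $(p,q)$ entries gives $a_{pq}(y_{qq}-y_{pp})=0$. Because $a_{pq}>0$ for all $p\ne q$, all the $y_{pp}$ are equal, and hence $Y$ is a multiple of $I$. (Irreducibility would suffice for this step, exactly as in the surjectivity argument of Step~3, but here the graph is complete and the argument is immediate.)

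Theorem~\ref{thm:certificate}(b) now applies. Take $d=e_i-e_1$, which satisfies $\sum_j d_j=0$. This gives $\sigma+s(e_i-e_1)\in\Rn$ for $0\le s\le s_0$, that is, the list $(\lambda_1-s,\lambda_2,\dots,\lambda_i+s,\dots,\lambda_n)$ is symmetrically realizable. The claim that the realizing matrix has zero diagonal comes from the construction in Step~3: there the curve $B(s)=e^{L(s)}(A+sM)e^{-L(s)}$ has zero diagonal identically, and it is $B(s)$ that realizes $\sigma+sd$. In particular $\sigma$ is not Guo-extreme.

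The only point needing care is that the zero-diagonal conclusion is not part of the statement of Theorem~\ref{thm:certificate}. We either cite its proof, or note that any $B\in\Snp$ realizing $\sigma+sd$ has trace $\sum_j\lambda_j=\tr A=0$, so its diagonal vanishes automatically. The second route is cleaner and is the one we use.
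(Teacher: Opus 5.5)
Your proposal is correct and follows the paper's proof. You show that every $Y\in\YA$ is diagonal, because it vanishes where $A$ is positive, and then a multiple of $I$, because it commutes with the irreducible $A$; you then apply Theorem~\ref{thm:certificate}(b) with $d=e_i-e_1$, and your trace argument for the zero diagonal (any nonnegative realization of a zero-sum list has zero diagonal) cleanly justifies a point the paper leaves implicit.
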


\begin{proof}
A matrix $Y\in\YA$ vanishes off the diagonal, and a diagonal matrix that commutes with the irreducible
matrix $A$ is a multiple of $I$. So Theorem~\ref{thm:certificate}(b) applies.
\end{proof}

Corollary~\ref{cor:fullsupport} only covers matrices without off-diagonal zeros. The next example shows
that Theorem~\ref{thm:certificate}(b) also applies to sparse matrices, and that a single additional
edge can destroy Guo-extremality.

\begin{example}\label{ex:house}
Let $A\in\mathcal{S}_5^+$ be the adjacency matrix of the graph obtained from the $5$-cycle
$1-2-3-4-5-1$ by adding the chord $\{1,3\}$ (the \emph{house} graph). Its characteristic polynomial is
$x(x+2)(x^3-2x^2-2x+2)$, so
\[
\sigma\approx(2.4812,\ 0.6889,\ 0,\ -1.1701,\ -2).
\]
We check that every matrix in $\YA$ is a multiple of $I$. Write $N(p)$ for the set of neighbours of
the vertex $p$, so that $N(1)=\{2,3,5\}$, $N(2)=\{1,3\}$, $N(3)=\{1,2,4\}$, $N(4)=\{3,5\}$ and
$N(5)=\{1,4\}$. A matrix $Y\in\YA$ vanishes at the edges of the graph, so its only possibly nonzero
entries are the diagonal entries and the entries in the positions $\{1,4\}$, $\{2,4\}$, $\{2,5\}$,
$\{3,5\}$ of the non-edges. The $(p,q)$ entry of $AY-YA$ is
$\sum_{k\in N(p)}y_{kq}-\sum_{k\in N(q)}y_{pk}$, and setting it equal to zero for the positions
$(1,4)$, $(2,5)$, $(3,5)$ and $(2,4)$, in this order, gives
\[
y_{24}=0,\qquad y_{35}=y_{24}=0,\qquad y_{25}=0,\qquad y_{14}=y_{25}=0 .
\]
Hence $Y$ is diagonal, and a diagonal matrix that commutes with the irreducible matrix $A$ is a
multiple of $I$. (Nonnegativity of $Y$ is not needed here.) By Theorem~\ref{thm:certificate}(b),
all trace-preserving moves from $\sigma$ can be reversed; in particular $\sigma$ is not Guo-extreme.

This should be compared with Example~\ref{ex:trianglefree}: the spectrum of the $5$-cycle itself is
Guo-extreme, while adding one chord, which creates a triangle, gives a spectrum from which all
trace-preserving moves can be reversed. Trace-zero lists of length $5$ have been characterized by
Spector \cite{Sp}, so for $n=5$ the conclusion could in principle also be checked against that
characterization.
\end{example}

Proposition~\ref{prop:perron} describes the lists obtained by adding to an element of $\Rn$ a vector
from the interior of $\Kn$: they are the spectra of positive matrices. There is an analogous
description for the face of $\Kn$ that consists of the trace-preserving directions,
\[
\Kn\cap\Bigl\{d:\ \sum_{i=1}^nd_i=0\Bigr\}
=\Bigl\{d\in\R^n:\ d_i\le 0\ \text{for } i\ge 2,\ d_1=-\sum_{i=2}^nd_i\Bigr\},
\]
which is generated by the vectors $e_1-e_i$, $i=2,\dots,n$.

\begin{proposition}\label{prop:offdiag}
Let $n\ge 2$ and $\sigma=(\lambda_1,\dots,\lambda_n)\in\Rn$. The following statements are equivalent.
\begin{enumerate}
\item[(a)] $\sigma$ is the spectrum of a matrix in $\Snp$ all of whose off-diagonal entries are
positive.
\item[(b)] There are $t_2,\dots,t_n>0$ such that
$\bigl(\lambda_1-\sum_{i=2}^nt_i,\ \lambda_2+t_2,\ \dots,\ \lambda_n+t_n\bigr)\in\Rn$.
\end{enumerate}
If these conditions hold, then $\sigma$ is not Guo-extreme.
\end{proposition}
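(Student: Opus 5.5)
We prove the two implications separately, modelled on the proof of Proposition~\ref{prop:perron}. The final statement is immediate from (b): by Proposition~\ref{prop:rays}(a) it suffices to reverse one move. Moreover, $\sigma-t_2(e_1-e_2)$ is obtained from the list in (b) by adding $\sum_{i\ge 3}t_i(e_1-e_i)\in\Kn$, so it lies in $\Rn$ by \eqref{eq:RnKn}. Hence $\tau_{e_1-e_2}\ge t_2>0$, and $\sigma$ is not Guo-extreme.

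\emph{(b)$\Rightarrow$(a).} Let $\sigma'=\sigma-d$ with $d=(\sum_{i\ge2}t_i,-t_2,\dots,-t_n)$, so $\sigma'\in\Rn$, $\sum_id_i=0$, and $d_1=\sum_{i\ge 2}|d_i|$. After permuting coordinates $2,\dots,n$ of $\sigma,\sigma',d$ simultaneously, we may assume $\lambda'_2\ge\dots\ge\lambda'_n$. Let $A'\in\Snp$ realize $\sigma'$, and apply Lemma~\ref{lem:perturb} with $E=\delta(J-I)$. Then $\tr E=0$, so $\sum_i\eta_i=0$ and $|\eta_i|\le\|E\|_F\le n\delta$.

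We must check that $d-\eta\in\Kn$. Since $\sum_i(d-\eta)_i=0$, this amounts to $(d-\eta)_i\le 0$ for all $i\ge 2$, by the description of the trace-preserving face of $\Kn$ given just before the proposition. This holds because $d_i=-t_i<0$ and $|\eta_i|\le n\delta$, once $n\delta<\min_i t_i$. Lemma~\ref{lem:perturb} then gives $B\ge A'+\delta(J-I)$ with eigenvalues $\lambda'_i+d_i=\lambda_i$. Every off-diagonal entry of $B$ is at least $\delta>0$, which is (a).

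\emph{(a)$\Rightarrow$(b).} Let $A\in\Snp$ realize $\sigma$ with positive off-diagonal entries, and let $u>0$ be its unit Perron vector. The plan is to subtract a small trace-zero perturbation supported off the diagonal. Since $A$ is irreducible, $\lambda_1$ is simple. For small $\varepsilon>0$ we take $A_\varepsilon=A-\varepsilon(J-I)$, which is still nonnegative with positive off-diagonal entries. Its eigenvalues $\mu_i$ satisfy $\sum_i\mu_i=\sum_i\lambda_i$.

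To first order, $\mu_1=\lambda_1-\varepsilon\,u^T(J-I)u+O(\varepsilon^2)$, and $u^T(J-I)u=(\sum_p u_p)^2-1>0$ when $n\ge 2$. So the Perron eigenvalue decreases at a definite rate. The remaining eigenvalues change by $+\varepsilon$ on eigenvectors orthogonal to $\mathbf 1$, since $-(J-I)=I-J$ acts as the identity there. In general they change by $\varepsilon\,x^T(I-J)x$ for unit vectors $x$ in the relevant eigenspaces. These quantities can be zero or negative when $x$ is nearly parallel to $\mathbf 1$, so this route is not clean.

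\emph{Main obstacle.} The obstacle is to get \emph{every} $\lambda_i$, $i\ge 2$, strictly increased while the trace is preserved; the naive choice $J-I$ can fail on eigenvectors correlated with $\mathbf 1$. The fix I would try first is to use instead $E=\varepsilon(uu^T-\diag(u_1^2,\dots,u_n^2))$, restricted off the diagonal. The subtraction $A-E$ keeps the off-diagonal entries positive for small $\varepsilon$, and it lowers $\lambda_1$ by $\varepsilon(1-\sum_pu_p^4)>0$ to first order. The other eigenvalues shift by $\varepsilon\,x^T\diag(u_p^2)x>0$ for every unit $x$ orthogonal to $u$, since $u>0$. Because $\tr E=0$, the exact spectrum of $A-E$ then has the form required in (b) with all $t_i>0$, for small $\varepsilon$ after ordering. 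Repeated eigenvalues can be handled either by Weyl-type eigenvalue bounds on the perpendicular space or by a preliminary generic perturbation. If this fails, the fallback is to apply Theorem~\ref{thm:certificate}(b)-style arguments via Lemma~\ref{lem:alternative}, using Step~2 of Theorem~\ref{thm:certificate} with the diagonal kept fixed.
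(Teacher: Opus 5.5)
Your proof is correct. For (b)$\Rightarrow$(a) it coincides with the paper's proof: Lemma~\ref{lem:perturb} with $E=\delta(J-I)$, together with the description of the trace-preserving face of $\Kn$. Your derivation of the last statement via \eqref{eq:RnKn} is exactly what the paper's appeal to Proposition~\ref{prop:rays}(b) amounts to.

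For (a)$\Rightarrow$(b) your route is genuinely different. The paper notes that $\mathcal{Y}_B$ consists of diagonal matrices commuting with the irreducible matrix $B$, hence of multiples of $I$. It then applies Theorem~\ref{thm:certificate} with $d=(-(n-1),1,\dots,1)$, which rests on Lemma~\ref{lem:alternative}(b) and, for zero diagonal, on Lemma~\ref{lem:curve}. You instead write the realizing matrix down: $A_\varepsilon=A-\varepsilon\bigl(uu^T-\diag(u_1^2,\dots,u_n^2)\bigr)$.

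Of your two suggestions for repeated eigenvalues, keep the Weyl one. A preliminary perturbation of $A$ would change $\sigma$, which must be matched exactly. The Weyl route also makes the argument exact rather than first-order:
\begin{itemize}
\item $A-\varepsilon uu^T$ has eigenvalues $\lambda_1-\varepsilon,\lambda_2,\dots,\lambda_n$.
\item $\varepsilon\min_pu_p^2\,I\preceq\varepsilon\diag(u_1^2,\dots,u_n^2)\preceq\varepsilon\max_pu_p^2\,I$, with $\max_pu_p^2<1$ because $n\ge 2$ and $u>0$.
\item Order $\lambda_2\ge\dots\ge\lambda_n$ and take $\varepsilon\le\lambda_1-\lambda_2$ and $\varepsilon u_pu_q<a_{pq}$ for $p\ne q$. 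Then the eigenvalues $\nu_1\ge\dots\ge\nu_n$ of $A_\varepsilon\in\Snp$ satisfy $\nu_1\le\lambda_1-\varepsilon(1-\max_pu_p^2)$ and $\nu_k\ge\lambda_k+\varepsilon\min_pu_p^2$ for $k\ge 2$.
\item Since $\tr A_\varepsilon=\tr A$, we get $\nu_1=\lambda_1-\sum_{k\ge 2}(\nu_k-\lambda_k)$.
\end{itemize}

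Your route gives an elementary, constructive proof with explicit bounds $t_k\ge\varepsilon\min_pu_p^2$. The realizing matrix keeps the diagonal of $A$ and has positive off-diagonal entries, and no theorem of the alternative or implicit function theorem is needed. The price is that you use positivity of every off-diagonal entry essentially: subtracting $\varepsilon u_pu_q$ at a zero position would create a negative entry. The paper's argument is a one-line specialisation of Theorem~\ref{thm:certificate}, whose hypothesis also covers sparse patterns such as Example~\ref{ex:house}. It also delivers the prescribed direction $t_2=\dots=t_n=s$ rather than some uncontrolled $t_k>0$. That difference is immaterial for (b), and your result recovers it via \eqref{eq:RnKn} for $s\le\min_kt_k$.
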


\begin{proof}
(a)$\Rightarrow$(b): Let $B\in\Snp$ have spectrum $\sigma$ and positive off-diagonal entries. Then $B$
is irreducible, and every matrix in $\mathcal{Y}_B$ is diagonal and commutes with $B$, hence is a
multiple of $I$. By Theorem~\ref{thm:certificate}, applied to $B$ and the direction
$d=(-(n-1),1,\dots,1)$, we have $\sigma+sd\in\Rn$ for small $s>0$, which is (b) with
$t_2=\dots=t_n=s$.

(b)$\Rightarrow$(a): Let $d=\bigl(\sum_{i\ge 2}t_i,-t_2,\dots,-t_n\bigr)$, so that
$\sigma'=\sigma-d\in\Rn$. Permuting the coordinates $2,\dots,n$ of $\sigma$, $\sigma'$ and $d$
simultaneously, we may assume that $\lambda'_2\ge\dots\ge\lambda'_n$. Let $A'\in\Snp$ realize
$\sigma'$, let $\delta>0$, and apply Lemma~\ref{lem:perturb} with $E=\delta(J-I)$. Then
$|\eta_i|\le\|\delta(J-I)\|_F<n\delta$ for all $i$, and $\sum_i\eta_i=\tr E=0$. If
$n\delta\le\min_{i\ge 2}t_i$, then $(d-\eta)_i=-t_i-\eta_i<0$ for all $i\ge 2$, and since
$\sum_i(d-\eta)_i=0$ we get $(d-\eta)_1=\sum_{i\ge 2}|(d-\eta)_i|$, that is, $d-\eta\in\Kn$.
Lemma~\ref{lem:perturb} gives $B\in\Sn$ with $B\ge A'+\delta(J-I)$ and eigenvalues
$\lambda'_i+d_i$, which are the entries of $\sigma$. The off-diagonal entries of $B$ are at least
$\delta$.

Finally, the vector $d$ in the proof of (b)$\Rightarrow$(a) belongs to $\Kn\setminus\{0\}$, so if (b)
holds then $\sigma$ is not Guo-extreme by Proposition~\ref{prop:rays}(b).
\end{proof}

Condition (b) in Proposition \ref{prop:offdiag} says that the trace-preserving moves of Corollary~\ref{cor:guo} can be undone for all
$i\ge 2$ simultaneously. This is stronger than not being Guo-extreme, which only requires that one
move can be undone, as the next example shows.

\begin{example}\label{ex:path}
The list $\sigma=(1,0,-1)$ is not Guo-extreme: for $0\le t<1$ and $a,b>0$ with $a^2+b^2=(1-t)^2$ the
matrix
\[
\begin{pmatrix}0&a&0\\a&0&b\\0&b&0\end{pmatrix}
\]
has spectrum $(1-t,0,-1+t)=\sigma-t(e_1-e_3)$. On the other hand, every matrix $A\in\mathcal{S}_3^+$ with
spectrum $\sigma$ has zero diagonal, because $\tr A=0$, and then $0=\det A=2a_{12}a_{13}a_{23}$, so some
off-diagonal entry of $A$ vanishes. Thus $\sigma$ does not satisfy the conditions of
Proposition~\ref{prop:offdiag}. In terms of the cone $\Kn$: Proposition~\ref{prop:perron} concerns its
interior and Proposition~\ref{prop:offdiag} the relative interior of its trace-preserving face, and
both correspond to a positivity pattern of a realizing matrix; Guo-extremality concerns the extreme
rays $e_1\pm e_i$ of $\Kn$, and this example shows that it is not determined in the same way by a
positivity pattern.
\end{example}

\begin{remark}\label{rem:certificate}
(i) In case~(a) of Theorem~\ref{thm:certificate} the hypothesis is $\YA=\{0\}$, and the
conclusion contains the contrapositive of \cite[Theorem~4]{L98}: taking $d=-(1,\dots,1)$ shows that
$\sigma$ is not extreme, and taking $d=-e_1$ shows that it is not Perron extreme, in agreement with
Proposition~\ref{prop:perron}. This case is essentially contained in \cite{L98,LSp}: as recalled above,
$\YA=\{0\}$ implies that $A$ is orthogonally similar to a positive symmetric matrix; from this, (a)
follows at once, and irreducibility of $A$ is not needed. Case~(b) is the new statement. Lists with
zero sum are always extreme and Perron extreme, so part~(b) and Corollary~\ref{cor:fullsupport} detect
lists that are not Guo-extreme although they are both extreme and Perron extreme. Laffey's argument in
\cite{L98} uses Gordan's theorem of the alternative, which is the special case of
Lemma~\ref{lem:alternative}(b) with $M=0$.

(ii) The theorem is consistent with Proposition~\ref{prop:criterion}. For $k\ge 1$ the matrix $A^k$
belongs to $\YA$ if and only if $\tr A^{k+1}=0$. For the matrices in Example~\ref{ex:trianglefree} we
have $\tr A^3=0$, so $A^2\in\YA$, and $A^2$ is not a multiple of $I$; for the $5$-cycle, $\YA$ also
contains the adjacency matrix of the complementary $5$-cycle.

(iii) The condition in Theorem~\ref{thm:certificate} is sufficient but not necessary: when $\YA$
contains a matrix that is not a multiple of $I$, the first-order argument gives no information, and a
different realizing matrix, or a move that is not of first order, may still exist. The analogous
phenomenon for extreme matrices is pointed out in \cite[Remark~(6)]{L98}.
\end{remark}

\begin{remark}[Other notions of extremality]
The three notions of extremality considered here are related as follows. Since $e_1\in\Kn$,
Proposition~\ref{prop:rays}(b) shows that every Guo-extreme list is Perron extreme, and we have seen
after Proposition~\ref{prop:perron} that every extreme symmetric spectrum in the sense of \cite{L98}
is Perron extreme as well. For $n\ge 3$ the
vector $(1,\dots,1)$ does not belong to $\Kn$, and the two notions of extreme and Guo-extreme are
defined by different directions; the lists in Proposition~\ref{prop:criterion} have zero sum and are
therefore extreme in both senses.
\end{remark}

\section*{Declaration of generative AI and AI-assisted technologies in the writing process}

The results of this note were obtained in an interactive session between the author and the AI
assistant Claude Opus 5.5 (Anthropic). The author directed the investigation, checked the arguments and the references, edited the text, and takes full responsibility for the content of the article.

\end{document}